\newtheorem{theorem}{Theorem}
\newtheorem{lemma}{Lemma}
\newtheorem{assumption}{Assumption}
\title{Measure of shape for object data}
\author{Joni Virta}
\affil{Department of Mathematics and Statistics\\ University of Turku}
\date{}
\begin{document}

\maketitle

\begin{abstract}
Object data analysis is concerned with statistical methodology for datasets whose elements reside in an arbitrary, unspecified metric space. In this work we propose the object shape, a novel measure of shape/symmetry for object data. The object shape is easy to compute and interpret, owing to its intuitive interpretation as interpolation between two extreme forms of symmetry. As one major part of this work, we apply object shape in various metric spaces and show that it manages to unify several pre-existing, classical forms of symmetry. We also propose a new visualization tool called the peeling plot, which allows using the object shape for outlier detection and principal component analysis of object data.
\end{abstract}

\section{Introduction}

Modern applications routinely produce datasets, such as images, functions or graphs, that do not take the familiar form of $n \times p$ data matrices. An emerging trend in the literature is to approach the analysis of such data in a type-agnostic way, by not specifying the actual sample space, but simply assuming that the observed \textit{objects}, $X_1, \ldots, X_n$, reside in a general metric space $(\mathcal{X}, d)$. Statistical methodology which depends on the object sample only through the interobject distances, $d(X_i, X_j)$, is collectively known as object data analysis, see, e.g., \cite{bhattacharya2003large, lyons2013distance, dubey2019frechet, dubey2022modeling, virta2022sliced, virta2023spatial} for works taking this viewpoint. The obvious advantage of such an approach is that any object data method is automatically applicable to all forms of object data, be they images, correlation matrices or graphs, making object data analysis highly universal.

The purpose of the current work is to propose a new descriptive statistic, the \textit{object shape}, for measuring the shape (level of symmetry), of object data. Given a distribution $P$ taking values in a fixed metric space $(\mathcal{X}, d)$ we define the object shape of $P$ as 
\begin{align}\label{eq:main_concept}
    O(P) = \frac{E \{ d(X_1, X_2)^2 d(X_1, X_3)^2 \}}{E \{ d(X_1, X_2)^4 \}},
\end{align}
where $X_1, X_2, X_3$ are independent draws from $P$. While the quantity $O(P)$ appears rather simple, we show in the sequel that it both has a very intuitive interpretation and manages to unify several well-known, established forms of shape/symmetry. To be more specific, as our main contributions about $O(P)$ we show that:

\begin{itemize}
    \item[(i)] The object shape $O(P)$ takes values in the interval $[1/2, 1]$. Moreover, $O(P) = 1/2$ if and only if the distribution $P$ is concentrated on a line (in the metric $d$) in $\mathcal{X}$, and $O(P) = 1$ if and only if three i.i.d. objects drawn from $P$ almost surely form an equilateral triangle (in the metric $d$). In other words, the object shape interpolates between two extreme forms of symmetry: the smaller $O(P)$ is, the more one-dimensional the distribution $P$, and the larger $O(P)$ is, the more evenly spread out $P$ is in the space~$\mathcal{X}$.
    \item[(ii)] The object shape $O(P)$ reduces to well-known measures of shape/symmetry under specific choices of $(\mathcal{X}, d)$ and $P$. For example, (a) for elliptical distributions in a Euclidean space, the object shape coincides with the classical notion of sphericity: the larger $O(P)$ is, the more the equidensity contours of $P$ resemble spheres, (b) for discrete distributions over a finite set, the object shape reduces to measuring the uniformity of the distribution $P$, as in the classical Pearson's $\chi^2$-test. Further examples are given in Section~\ref{sec:cases}.
    \item[(iii)] The scenario-specific upper bounds for the object shape $O(P)$ can be used to devise hypothesis tests for the related symmetries. For example, besides recovering the Pearson's $\chi^2$-test for discrete data as mentioned already above, we obtain a new test of uniformity on the unit circle, see Section \ref{sec:cases}.
\end{itemize}

The diversity and difficulty of visualizing object data means that data summaries such as the proposed object shape can be seen to be even more important for object data than in traditional data analysis. Thus far, the literature on descriptive tools for object data has focused heavily on location estimation, particularly in conjunction with statistical depth measures, see \cite{cholaquidis2020weighted, dai2022tukey, geenens2023statistical, virta2023spatial}. The concept of Frech{\'e}t mean, i.e., the minimizer of the map $\mu \mapsto E\{ d(X, \mu)^2 \}$, also falls in this category, see, e.g., \cite{bhattacharya2003large, dubey2019frechet}. The most prominent examples of descriptive statistics measuring (co)variation are the Frech{\'e}t variance, i.e.,  $E\{ d(X, \mu_0)^2 \}$ where $\mu_0$ is the Frech{\'e}t mean, and the distance covariance \citep{lyons2013distance}. In some sense multidimensional scaling \citep{kruskal1978multidimensional}, a classical method of object data visualization, can also be seen as a member of this class as it is essentially based on principal component analysis, a second-order method. 

Using classical statistical terminology, none of the methods listed in the previous paragraph thus measure \textit{shape}, i.e., the properties of $P$ beyond location or scale. Whereas, as we argue in Section \ref{sec:main}, our object shape $O(P)$ is invariant to both location and scale (at least when $\mathcal{X}$ is structured enough to admit such concepts), meaning that calling it a measure of shape is warranted. Indeed, as far as we are aware, our object shape is the first measure of shape for object data. This, in conjunction with the clear interpretation and fast computation of $O(P)$ in practice, means that our proposal offers a valuable addition to the toolbox of descriptive statistical analysis of object data. 

In addition to establishing the theoretical properties of $O(P)$ listed earlier, as one of our contributions we show how object shape gives rise to a new visualization tool for object data which we call the peeling plot. This plot is constructed by removing observations one-by-one from an object data set in such a way that the object shape $O(P)$ is maximized/minimized at each step. The peeling plot is then obtained as the scatter plot between the indices of the removed observations and the value of $O(P)$ at each step. If minimization is used, the peeling plot leads into a novel method for conducting principal component analysis for object data. Whereas, under maximization, the peeling plot can be used for detecting outlying objects, see the examples in Section \ref{sec:peeling}. 

% The structure of this paper is somewhat unconventional. Our main result abot $O(P)$ in the case of general case is stated in the short Section XXX, where we also briefly discuss its sample estimation. The bulk of the paper is then Section XXX where we one-by-one consider four specific combinations of metric space and distributional family, and show that our proposed concept leads to meaningful forms of shape and symmetry in them. Afterwards, in Section XXX we present the peeling plot and use it to find outliers and the leading principal direction of various object data sets. We conclude in Section XXX with some ideas for future study.

\section{Main result}\label{sec:main}

Let $(\mathcal{X}, d)$ be a metric space and let $\mathcal{P}$ denote a probability distribution on $X$. Throughout this work, we assume that the following condition holds true.

\begin{assumption}\label{assu:finite_fourth_moment}
The distribution $P$ is such that, for independent $X_1, X_2 \sim P$,
\begin{itemize}
    \item[(i)] There exists a point $a \in \mathcal{X}$ such that $E \{ d(X_1, a)^4 \} < \infty$.
    \item[(ii)] The random variable $d(X_1, X_2)$ is not almost surely equal to zero.
\end{itemize}

\end{assumption}

Assumption \ref{assu:finite_fourth_moment}(i) can be seen as the object data equivalent of assuming that the fourth moment of a real random variable is finite. In particular, by Cauchy-Schwarz inequality and the triangle inequality, it guarantees that also the moments $E \{ d(X_1, X_2)^2 d(X_1, X_3)^2 \}$ and $E \{ d(X_1, X_2)^4 \} $ exist as finite. Whereas, Assumption \ref{assu:finite_fourth_moment}(ii) simply requires that the distribution $\mathcal{P}$ is not a trivial Dirac point mass.

Our main point of interest is the object shape $O(P)$, defined in \eqref{eq:main_concept}, which is well-defined under Assumption \ref{assu:finite_fourth_moment}. Before its proper study, the form of $O(P)$ already offers us some hints on its meaning and interpretation: Firstly, the denominator and numerator in \eqref{eq:main_concept} have the same ``degree'' (four), implying that $O(P)$ is a dimensionless quantity. Secondly, if $(\mathcal{X}, d)$ is a normed space such that $d(X_1, X_2) = \| X_1 - X_2 \|$ for some norm $\| \cdot \|$, it is clear that $O(P)$ is invariant to translation and scaling of the distribution $P$ (i.e., invariant to maps $X_i \mapsto a X_i + b$). These two observations together lead us to conclude that $O$ measures some deeper, scale and location-invariant aspect of the distribution $P$, i.e., its shape.

As our main result of this section, we next show that $O(P)$ is constrained to the interval $[1/2, 1]$ and give geometric characterizations for the endpoints of this interval. In the following theorem we use the phrase  ``$x_1, x_2, x_3$ reside on a line'', to mean that the three points are such that the triangle inequality $d(x_i, x_j) \leq d(x_i, x_j) + d(x_j, x_k)$ achieves equality for some ordering of the indices 1, 2, 3. In a Euclidean space, this terminology thus corresponds to the standard definition of lines.

\begin{theorem}\label{theo:geometric_characterization}
Under Assumption \ref{assu:finite_fourth_moment}, $O(P) \in [1/2, 1]$. Furthermore,
\begin{itemize}
    \item [(i)] $O(P) = 1/2$ if and only if
        \begin{align*}
        \mathrm{pr}(\{x_1, x_2, x_3 \in \mathcal{X} \mid x_1, x_2, x_3 \mbox{ reside on a line.} \}) = 1.
    \end{align*}
    \item [(ii)] $O(P) = 1$ if and only if
    \begin{align*}
        \mathrm{pr}(\{x_1, x_2, x_3 \in \mathcal{X} \mid d(x_1, x_2) = d(x_2, x_3) = d(x_3, x_1) \}) = 1.
    \end{align*}
\end{itemize}
\end{theorem}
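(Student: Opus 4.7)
The plan is to exploit the exchangeability built into the numerator and denominator of $O(P)$ together with two elementary inequalities: Cauchy--Schwarz for the upper bound and a Heron-type algebraic identity for the lower bound. Write $A = d(X_1, X_2)$, $B = d(X_1, X_3)$, and $C = d(X_2, X_3)$. Since $X_1, X_2, X_3$ are i.i.d., the triple $(A, B, C)$ is exchangeable, so in particular $E \{ A^4 \} = E \{ B^4 \} = E \{ C^4 \}$ and $E \{ A^2 B^2 \} = E \{ A^2 C^2 \} = E \{ B^2 C^2 \}$. This lets me rewrite $O(P) = E \{ A^2 B^2 \} / E \{ A^4 \}$ and symmetrize freely below.

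For the upper bound, Cauchy--Schwarz applied to the pair $(A^2, B^2)$ gives $E \{ A^2 B^2 \} \leq \sqrt{E \{ A^4 \} E \{ B^4 \}} = E \{ A^4 \}$, so $O(P) \leq 1$. Equality forces $A^2$ and $B^2$ to be linearly dependent in $L^2$, and Assumption~\ref{assu:finite_fourth_moment}(ii) prevents either from being a.s.~zero, so $A^2 = c B^2$ a.s.~for some $c > 0$; taking expectations and using that $A$ and $B$ have the same marginal distribution pins down $c = 1$, hence $A = B$ a.s. Now I apply the permutation swapping $X_1$ and $X_2$: this maps $(A, B)$ to $(A, C)$ in joint distribution, so $A = B$ a.s.~forces $A = C$ a.s.~as well, which is the equilateral characterization required in part~(ii).

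The lower bound rests on the algebraic identity
\begin{align*}
  (A + B + C)(-A + B + C)(A - B + C)(A + B - C) = 2A^2 B^2 + 2A^2 C^2 + 2B^2 C^2 - A^4 - B^4 - C^4,
\end{align*}
whose right-hand side is $16$ times the squared Heron area of a (possibly degenerate) triangle with side lengths $A, B, C$. The metric triangle inequality applied to $X_1, X_2, X_3$ in $(\mathcal{X}, d)$ makes each of the four factors on the left nonnegative pointwise, so the right-hand side is nonnegative. Taking expectations and invoking exchangeability collapses it to $6 E \{ A^2 B^2 \} - 3 E \{ A^4 \} \geq 0$, which is equivalent to $O(P) \geq 1/2$. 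Equality requires the (nonnegative) pointwise expression to vanish a.s., and by the factorization this forces one of $A = B + C$, $B = A + C$, $C = A + B$ to hold at each outcome, i.e., the three draws reside on a line in the sense of the theorem, which is part~(i).

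The only step that is not pure bookkeeping is spotting (and verifying by direct expansion of $[(B+C)^2 - A^2][A^2 - (B-C)^2]$) the Heron factorization, together with the observation that its nonnegativity in a general metric space follows from the triangle inequality alone, without any Euclidean embedding of the triple $(X_1, X_2, X_3)$. Everything else reduces to Cauchy--Schwarz, exchangeability, and the standard equality cases of those two tools.
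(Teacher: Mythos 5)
Your proof is correct, and it is worth noting that for the lower bound you and the paper are really proving the same polynomial inequality in two guises: the paper squares the combined triangle/reverse-triangle bound $2 d_{12} d_{13} \geq | d_{12}^2 + d_{13}^2 - d_{23}^2 |$, and expanding $4A^2B^2 - (A^2+B^2-C^2)^2$ yields exactly your Heron product $(A+B+C)(-A+B+C)(A-B+C)(A+B-C)$, with the same equality analysis (one of the three degenerate-triangle identities must hold pointwise; the factor $A+B+C=0$ forces all distances to vanish and is subsumed by the others). Your Heron packaging is arguably cleaner because the nonnegativity of each factor is immediate from the triangle inequality, whereas the paper has to track the sign of $A^2+B^2-C^2$ separately. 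For the upper bound the routes genuinely diverge: the paper applies the power mean inequality to $(d_{12}^2, d_{23}^2, d_{31}^2)$ and reads off $A=B=C$ directly from its equality case, while you use Cauchy--Schwarz on $(A^2, B^2)$, pin down the proportionality constant via the common marginal law and Assumption~\ref{assu:finite_fourth_moment}(ii), and then need the extra (correct) exchangeability step mapping the a.s.\ event $\{A=B\}$ to $\{A=C\}$ under the swap $X_1 \leftrightarrow X_2$ to reach the equilateral conclusion. Both arguments are elementary and use only the exchangeability of $(A,B,C)$; the power mean route is slightly more economical at the equality stage, but nothing is lost in yours.
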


By Theorem \ref{theo:geometric_characterization}, it is indeed reasonable to regard $O(P)$ as a a measure of symmetry or shape of the distribution $P$. The quantity $O(P)$ achieves its maximal value if and only if three points drawn i.i.d. from $P$ almost surely form an equilateral triangle, in the sense of the metric $d$. While such an event is unlikely in any practically reasonable scenario, intuitively it means that large values of $O(P)$ can only be achieved when $P$ is spread out sufficiently evenly in the space $\mathcal{X}$. Later in Section \ref{sec:cases} we show that in several situations $O$ is maximized when the distribution in question is ``uniform'' in some specific sense, implying that the previous intuition indeed holds true. Whereas, the minimal value is achieved precisely when $P$ is almost surely concentrated on a line (in the sense of the metric $d$). Hence, $O(P)$ can also be seen as a continuous relaxation of the concept of dimension: its minimal value corresponds to $P$ charging all its mass on a ``line'', i.e., a one-dimensional object, in $\mathcal{X}$, whereas $O(P)$ is maximized when $P$ spreads out to the full space $\mathcal{X}$, corresponding, in some sense, to a maximal dimension.

Being essentially a moment-based quantity, the sample estimation of \eqref{eq:main_concept} is simple. For a distribution $P$, we let the notation $P_n$ denote the empirical distribution of a random sample $X_1, \ldots, X_n$ of size $n$ drawn from $P$. Hence, a natural estimator of $O(P)$ is
\begin{align}\label{eq:main_concept_sample}
    O(P_n) = \frac{\frac{1}{n^3} \sum_{i = 1}^n \sum_{j = 1}^n \sum_{k = 1}^n d(X_i, X_j)^2 d(X_i, X_k)^2}{\frac{1}{n^2} \sum_{i = 1}^n \sum_{j = 1}^n \sum_{k = 1}^n d(X_i, X_j)^4} = \frac{1_n^{\mathrm{T}} B_n^2 1_n}{n \| B_n \|^2},
\end{align}
where $1_n \in \mathbb{R}^n$ is a vector of ones, the $n \times n$ matrix $B_n = \{ d(X_i, X_j)^2 \}$ contains all pairwise squared distances between the observations and $\| \cdot \|$ denotes the Frobenius norm. Consequently, given a matrix $B_n$, computing $O(P_n)$ is an operation of complexity $\mathcal{O}(n^3)$. By the standard results on $U$-statistics \citep{lee1990u}, it is simple to check that this estimator is consistent, $O(P_n) = O(P) + o_P(1)$. Furthermore, the convergence rate of root-$n$ can be obtained for the error term by taking on stronger moment conditions for $d(X_1, a)$ in Assumption~\ref{assu:finite_fourth_moment}.

\section{Example scenarios}\label{sec:cases}

\subsection{Introduction}

In the following subsections, we illustrate the object shape under four different combinations of metric space and distributional family. In each case, we show that the range of values of $O(P)$ is actually a strict subset of the general interval $[1/2, 1]$ derived in Theorem \ref{theo:geometric_characterization}. Additionally, we characterize the minimal and maximal values of $O(P)$ in terms of the parameters of the underlying distribution family and derive a hypothesis test of symmetry corresponding to the distribution with the largest value for $O(P)$ in each case. We stress that not all of these scenarios are necessarily particularly ``relevant'' in modern statistical applications. Instead, they have been chosen here (i) to demonstrate the wide range of situations $O(P)$ can be applied in, (ii) for the clear interpretations they offer, and (iii) because closed-form solutions are available in them.  

We are also aware that there are more specialized tools available for studying each of these data scenarios. Indeed, our task here is not to derive the most efficient solutions or tests for each given scenario, but rather to show that multiple seemingly disjoint concepts in classical statistics are unified by and expressible via the object shape $O(P)$, further highlighting its usefulness in the analysis of object data. 

\subsection{Elliptical distribution}

In this subsection, we take $(\mathcal{X}, d)$ to be the $p$-dimensional Euclidean space equipped with the Euclidean metric. Let $R \sim F_R$ where $F_R$ is some fixed distribution on the non-negative real numbers. Let $U$ be independent of $R$ and obey the uniform distribution on the unit sphere $\mathbb{S}^{p - 1}$. We denote by $P_{\mu, \Sigma}$ the distribution of the random vector
\begin{align*}
    X = \mu + R \Sigma^{1/2} U,
\end{align*}
where $\mu \in \mathbb{R}^p$ and the matrix $\Sigma^{1/2} \in \mathbb{R}^{p \times p}$ is non-zero and positive semidefinite. Thus $P_{\mu, \Sigma}$ is a member of the family of non-degenerate \textit{elliptical distributions}~\citep{fang1990symmetric} with the fixed radial distribution $F_R$. This family of distributions has two parameters, $\mu$ and $\Sigma$, which can be interpreted similarly as the mean vector and the covariance matrix of the multivariate normal distribution. If $F_R$ admits a density, then the equidensity contours of $P_{\mu, \Sigma}$ are ellipses with the directions and lengths of their axes determined by the eigenvectors and eigenvalues of $\Sigma$, respectively. The following theorem confirms the intuition that $P_{\mu, \Sigma}$ should be at its most symmetric when all eigenvalues of $\Sigma$ are equal, i.e., when $P_{\mu, \Sigma}$ is \textit{spherical} and its equidensity contours spheres.
 
\begin{theorem}\label{theo:elliptical_case}
    Assume that $E(R^4) < \infty$ and denote
    \begin{align*}
        u_R = \frac{1}{2} + \frac{p - 1}{p ( \beta_R + 1 ) + 2},
    \end{align*}
    where $\beta_R = E(R^4)/ \{ E(R^2) \}^2$. Then, $(\mu, \Sigma) \mapsto O(P_{\mu, \Sigma})$ takes values in $[1/2, u_R]$ and
    \begin{itemize} 
    \item [(i)] $O(P_{\mu, \Sigma}) = 1/2$ if and only if $\Sigma$ has rank $1$.
    \item [(ii)] $O(P_{\mu, \Sigma}) = u_R$ if and only if $\Sigma = \lambda I_p$ for some $\lambda > 0$.
\end{itemize}
\end{theorem}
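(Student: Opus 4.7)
The plan is to reduce the computation to a one-parameter optimization and then use monotonicity. First, by the translation and scale invariance of $O$ noted in Section~\ref{sec:main} (the Euclidean distance is a norm), I can take $\mu = 0$ without loss of generality. Next, writing the spectral decomposition $\Sigma = V \Lambda V^{\mathrm{T}}$ with $\Lambda = \mathrm{diag}(\lambda_1, \ldots, \lambda_p)$, and using that $V^{\mathrm{T}} U$ is again uniform on $\mathbb{S}^{p-1}$ and that the Euclidean norm is orthogonally invariant, I may assume $\Sigma = \Lambda$ is diagonal. So I work with $X_i = R_i \Lambda^{1/2} U_i$, and introduce the shorthand $S_i = U_i^{\mathrm{T}} \Lambda U_i$ and $T_{ij} = U_i^{\mathrm{T}} \Lambda U_j$, so that $\| X_i - X_j \|^2 = R_i^2 S_i + R_j^2 S_j - 2 R_i R_j T_{ij}$.

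The heart of the proof is computing the expectations. The needed spherical moments are $E(U_i) = 0$, $E(U_i U_i^{\mathrm{T}}) = I_p/p$, and the fourth-moment identity
\begin{align*}
E(U_a U_b U_c U_d) = \frac{1}{p(p+2)}(\delta_{ab}\delta_{cd} + \delta_{ac}\delta_{bd} + \delta_{ad}\delta_{bc}),
\end{align*}
from which I get $E(S) = \mathrm{tr}(\Lambda)/p$, $E(S^2) = \{\mathrm{tr}(\Lambda)^2 + 2\,\mathrm{tr}(\Lambda^2)\}/\{p(p+2)\}$, and $E(T_{12}^2) = \mathrm{tr}(\Lambda^2)/p^2$. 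When expanding $E(\|X_1-X_2\|^4)$ and $E(\|X_1-X_2\|^2 \|X_1-X_3\|^2)$, every term linear in some $T_{ij}$ (conditional on $U_i$) vanishes because $E(U_j) = 0$, and the same reasoning kills the cross term $E(T_{12} T_{13}) = 0$. What survives depends only on $a = \mathrm{tr}(\Lambda)^2$ and $b = \mathrm{tr}(\Lambda^2)$, together with $\alpha = E(R^4)$ and $\gamma = \{E(R^2)\}^2$. Setting $c = \alpha p$ and $e = \gamma(p+2)$, a short algebraic simplification (the key observation being $4b + 2a = 2(a+2b)$ in the denominator) reduces the ratio to
\begin{align*}
O(P_{0,\Lambda}) = \frac{(c+3e)\,t + 2c}{2(c+e)(t+2)}, \qquad t = a/b.
\end{align*}

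The final step is a one-variable analysis. The derivative of the right-hand side in $t$ equals $6e / \{2(c+e)(t+2)^2\} > 0$, so $O$ is strictly increasing in $t$. Since $\lambda_k \geq 0$, the Cauchy--Schwarz inequality gives $t \in [1,p]$, with $t = 1$ if and only if $\Lambda$ has rank one, and $t = p$ if and only if all $\lambda_k$ coincide, i.e., $\Sigma = \lambda I_p$. Plugging $t = 1$ yields $O = 1/2$, and plugging $t = p$ and substituting back $\alpha, \gamma$ in favour of $\beta_R = \alpha/\gamma$ yields $u_R = p(\beta_R + 3)/\{2(p(\beta_R+1)+2)\}$, which rearranges into $1/2 + (p-1)/\{p(\beta_R+1)+2\}$. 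The main obstacle is really just the bookkeeping in step two: there are many cross terms, so one has to carefully identify which ones vanish by oddness in some $U_j$ before the expression collapses to its clean two-invariant form.
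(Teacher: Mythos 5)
Your proof is correct and follows essentially the same route as the paper: reduce $O(P_{\mu,\Sigma})$ via the spherical fourth-moment identities to a linear-fractional function of the single invariant $\mathrm{tr}(\Sigma^2)/\{\mathrm{tr}(\Sigma)\}^2$ (you use its reciprocal $t$), establish strict monotonicity, and apply the power-mean/Cauchy--Schwarz bounds $t\in[1,p]$ with their equality conditions. The only differences are cosmetic (explicit differentiation instead of the generic $AD>BC$ criterion, and the harmless extra diagonalization of $\Sigma$), and your intermediate formula matches the paper's after the substitution $t = 1/G$.
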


By Theorem \ref{theo:elliptical_case}, $O(P_{\mu, \Sigma})$ essentially measures the relative contribution of the first principal component of $P_{\mu, \Sigma}$ to its total variation: The more concentrated $P_{\mu, \Sigma}$ is on a one-dimensional subspace, the smaller the value of $O(P_{\mu, \Sigma})$. Conversely, if $P_{\mu, \Sigma}$ does not favor any particular direction, the maximal value $u_R$ is reached. These points are further illustrated in Figure \ref{fig:ellipses} where we have assumed that $p = 2$ and $R^2 \sim F(2, 8)$, making $X$ have a bivariate $t$-distribution with 8 degrees of freedom \citep{fang1990symmetric}. The values of $O(P_{\mu, \Sigma})$ given in the figure caption clearly correspond to the shape of the ellipses in the manner described in Theorem \ref{theo:elliptical_case}.

\begin{figure}
    \centering
    \includegraphics[width = 0.75\textwidth]{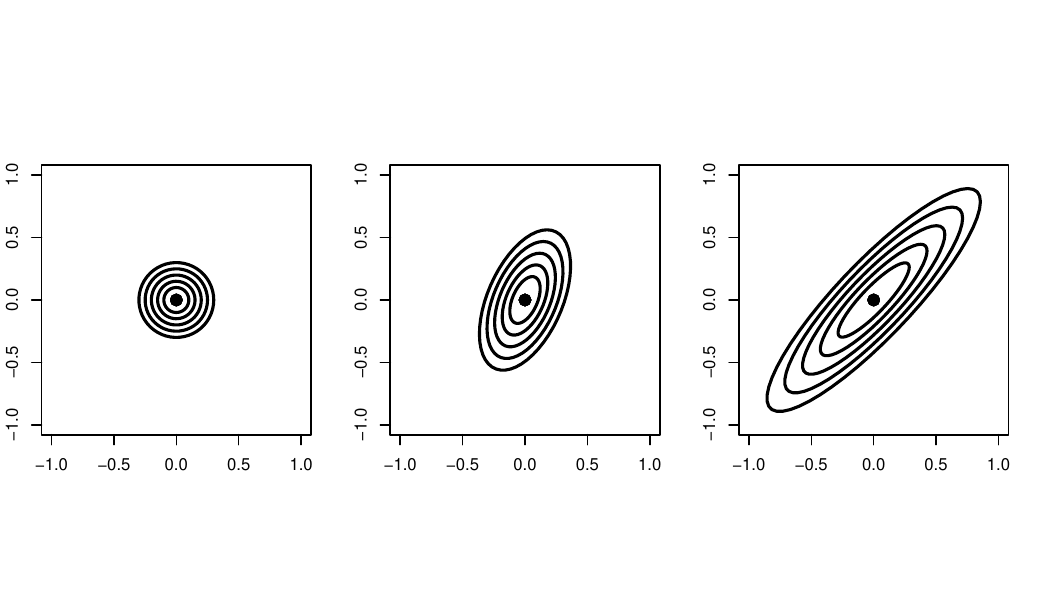}
    \caption{The equidensity contours of bivariate $t$-distribution with 8 degrees of freedom under three different covariance structures. The values of $O(P_{\mu, \Sigma})$ corresponding to the three panels are 0.600, 0.554 and 0.512, respectively.}
    \label{fig:ellipses}
\end{figure}

We additionally note the following consequences of Theorem \ref{theo:elliptical_case}: (i) When $p = 1$, the distribution $P_{\mu, \Sigma}$ equals that of its first principal component, making the two cases of Theorem \ref{theo:elliptical_case} equal, $u_R = 1/2$ regardless of $F_R$, and $O(P_{\mu, \Sigma})$ a constant function. (ii) As a function of the distribution $F_R$, the upper bound $u_R$ in Theorem~\ref{theo:elliptical_case} is maximal when $E(R^4) = \{ E(R^2) \}^2$, i.e., when $F_R$ is a Dirac point mass, in which case $u_R = p/(p + 1)$. This maximal value is reached if and only if $P_{\mu, \Sigma}$ is a uniform distribution on a sphere of arbitrary radius in $\mathbb{R}^p$, making these distributions the most symmetric, in the sense of the object shape, out of all $p$-variate elliptical distributions.

We now move to consider a hypothesis test for the null hypothesis of sphericity, $\Sigma = \lambda I_p$ for some $\lambda > 0$, giving the asymptotic null distribution of $O(P_{\mu, \lambda I_p, n})$ as our next result.

\begin{theorem}\label{theo:elliptical_case_h0}
    Let $\mu \in \mathbb{R}^p$, $\lambda > 0$ be fixed and assume that $E(R^8) < \infty$. Then,
    as $n \rightarrow \infty$,
    \begin{align*}
        \sqrt{n} \left\{ u_R - O(P_{\mu, \lambda I_p, n}) \right\} \rightsquigarrow \mathcal{N}(0, \sigma_{R}^2),
    \end{align*}
    where $u_R$ is as in Theorem \ref{theo:elliptical_case} and the constant $\sigma_R^2$ is a function of the moments of $F_R$ and is given in the proof of this theorem.
\end{theorem}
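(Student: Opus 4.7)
The plan is to view $O(P_n)$ as a smooth function of two $V$-statistics and combine the delta method with the standard Hoeffding projection. Because $O(P)$ is invariant to location and scaling in Euclidean spaces, it suffices to treat the case $\mu = 0$, $\lambda = 1$, so that $X_i = R_i U_i$ with $R_i \sim F_R$ and $U_i$ uniformly distributed on $\mathbb{S}^{p-1}$, all independent. Writing $O(P_n) = V_n^N / V_n^D$ with
\begin{align*}
    V_n^N = \frac{1}{n^3} \sum_{i,j,k} \| X_i - X_j \|^2 \| X_i - X_k \|^2, \qquad V_n^D = \frac{1}{n^2} \sum_{i,j} \| X_i - X_j \|^4,
\end{align*}
both $V_n^N$ and $V_n^D$ are $V$-statistics with symmetric kernels (after averaging the numerator kernel over the three cyclic roles of the base point). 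The assumption $E(R^8) < \infty$ is precisely what is needed for both kernels to have finite second moment, and hence for the first-order Hoeffding projection to deliver a central limit theorem \citep{lee1990u}.

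Next I would write the Hoeffding expansions
\begin{align*}
    \sqrt{n}(V_n^N - N) = \frac{3}{\sqrt{n}} \sum_{i=1}^n \tilde{h}_1(X_i) + o_P(1), \quad \sqrt{n}(V_n^D - D) = \frac{2}{\sqrt{n}} \sum_{i=1}^n \tilde{g}_1(X_i) + o_P(1),
\end{align*}
where $N = E\{\|X_1 - X_2\|^2 \|X_1 - X_3\|^2\}$, $D = E\{\|X_1 - X_2\|^4\}$, and $\tilde{h}_1$, $\tilde{g}_1$ are the centered one-variable projections of the symmetrised numerator and denominator kernels, respectively. Since $N/D = u_R$ by Theorem~\ref{theo:elliptical_case}, the delta method applied to the map $(N,D) \mapsto N/D$ together with Slutsky's lemma gives
\begin{align*}
    \sqrt{n}\{u_R - O(P_{\mu, \lambda I_p, n})\} = -\frac{1}{D \sqrt{n}} \sum_{i=1}^n \{ 3 \tilde{h}_1(X_i) - 2 u_R \tilde{g}_1(X_i) \} + o_P(1),
\end{align*}
to which the ordinary univariate CLT applies, yielding the claimed normal limit with
\begin{align*}
    \sigma_R^2 = \frac{1}{D^2} \mathrm{Var}\{ 3 \tilde{h}_1(X_1) - 2 u_R \tilde{g}_1(X_1) \}.
\end{align*}

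The final, and longest, step is to render $\sigma_R^2$ in closed form as a function of the moments of $F_R$. Using the identity $\| X_i - X_j \|^2 = R_i^2 + R_j^2 - 2 R_i R_j U_i^\top U_j$ and the independence of the radii from the directions, both $\tilde{h}_1(X_1)$ and $\tilde{g}_1(X_1)$ reduce to polynomials in $R_1$ whose coefficients are moments of products of the form $R_2^{a} R_3^{b}$ paired with $(U_1^\top U_2)^{c} (U_1^\top U_3)^{d} (U_2^\top U_3)^{e}$; the required spherical moments are standard integrals on $\mathbb{S}^{p-1}$ expressible through the beta function. Expanding $\mathrm{Var}\{3 \tilde{h}_1(X_1) - 2 u_R \tilde{g}_1(X_1)\}$ in terms of $E(R^k)$ for $k \le 8$ then produces the explicit formula for $\sigma_R^2$. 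The main obstacle in the proof is thus purely combinatorial: the expansion is laborious, but no new probabilistic technique beyond the delta method for $V$-statistics is required.
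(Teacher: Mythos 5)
Your proposal is correct and would deliver the stated limit, but it reaches asymptotic linearity by a genuinely different decomposition than the paper's. You treat the numerator and denominator as $V$-statistics of degrees three and two, invoke their first-order Hoeffding projections, and then apply the delta method to the ratio map $(N,D)\mapsto N/D$; the paper instead uses the translation invariance of the Euclidean metric together with the constraint $\sum_i (X_i-\bar X)=0$ to establish the \emph{exact} algebraic identity
\begin{align*}
O(P_{\mu,\lambda I_p,n})=\frac{\tilde m_n+3\{\mathrm{tr}(\tilde S_n)\}^2}{2\tilde m_n+4\,\mathrm{tr}(\tilde S_n^2)+2\{\mathrm{tr}(\tilde S_n)\}^2},
\qquad \tilde m_n=\frac{1}{n}\sum_{i=1}^n\|X_i-\bar X\|^4,
\end{align*}
with $\tilde S_n$ the sample covariance matrix, and then applies the ordinary multivariate CLT and the delta method to this smooth function of $(\mathrm{vec}(S_n),m_n)$, the needed spherical moments being supplied by Lemma~\ref{lem:moments_sphere}. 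The two routes are equivalent in substance (the Hoeffding projections of your kernels are precisely affine functions of $\|x\|^4$ and $xx^{\mathrm{T}}$), but the paper's identity makes the closed form for $\sigma_R^2$ essentially mechanical, since the joint covariance of $(\mathrm{vec}(S_n),m_n)$ is available explicitly, whereas your route still requires computing $\tilde h_1$ and $\tilde g_1$ as polynomials in $R_1^2$ before taking a variance. One point you gloss over is possible degeneracy of the first-order projection: the paper's explicit formula shows that $\sigma_R^2$ vanishes when $p=1$ or when $F_R$ is a point mass (so the displayed convergence, while formally correct as a $\mathcal N(0,0)$ limit, is then uninformative and the true rate is faster), and Theorem~\ref{theo:von_mises_case_h0} shows that exactly this degeneracy occurs in the circular scenario, where the rate becomes $n$; a complete write-up should at least note when $\sigma_R^2>0$.
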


The test statistic in Theorem \ref{theo:elliptical_case_h0} is somewhat impractical in the sense that, for unknown $F_R$, its use requires estimating several higher moments of the radial variate $R$ needed to compute $u_R$ and $\sigma_R^2$. And while this is technically possible, it makes the test statistic $O(P_{\mu, \lambda I_p, n})$ less attractive in practice compared to its well-established competitors that do not necessitate this, see, e.g., \cite{hallin2006semiparametrically}.

% Despite this, the derived test serves to demonstrate that our proposed measure of shape is meaningful and manages to capture a natural concept of symmetry for elliptical distributions. 

% We conclude this section by highlighting the result of Theorem \ref{theo:elliptical_case_h0} under the multivariate normal distribution, a special case of an elliptical distribution. In this case, $R^2 \sim \chi^2_p$ and the limiting distribution in Theorem \ref{theo:elliptical_case_h0} can be computed to take the following, rather simple, form,
% \begin{align*}
% \sqrt{n} \left\{ \frac{2p + 1}{2p + 4} - O(P_{\mu, \lambda I_p, n}) \right\} \rightsquigarrow  \mathcal{N} \left( 0, \frac{(p - 1)^2}{2 p (p + 3)^3} \right).
% \end{align*}

% \frac{\beta_R + 3}{2 \beta_R + 2 + 4/p} =

\subsection{von Mises distribution on the circle}

Let $\mathcal{X} = \mathbb{S}^1$ be the unit circle and let $P_{\kappa}$ denote the centered von Mises distribution with the concentration parameter $\kappa \geq 0$, see, e.g., \cite{mardia2000directional}. The probability density function of $P_{\kappa}$ thus equals
\begin{align*}
    f_{\kappa}(x) = \frac{1}{2 \pi I_0(\kappa)} e^{\kappa \cos(x)}, \quad x \in [0, 2 \pi),
\end{align*}
where $I_0$ is the modified Bessel function of the first kind and order $0$. The density $f_{\kappa}$ has been plotted in Figure \ref{fig:von_mises} for various values of $\kappa$. A random variable $X \sim P_{\kappa}$ thus corresponds to a random point (its angle) on the unit sphere and the larger the value of $\kappa$ is, the more concentrated the distribution is around the zero angle. As $\kappa \rightarrow \infty$, the distribution approaches a point mass and, conversely, in the other extreme, $\kappa = 0$, we obtain the uniform distribution on the unit circle.

\begin{figure}
    \centering
    \includegraphics[width=0.75\textwidth]{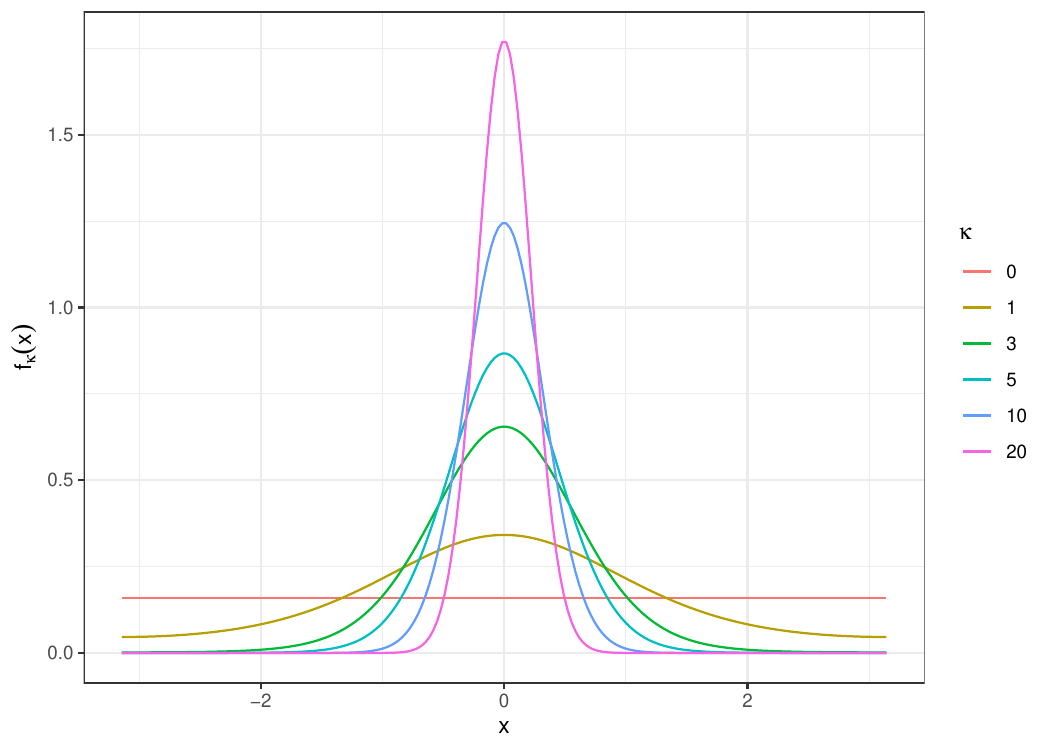}
    \caption{The density functions $f_{\kappa}$ of the von Mises distribution with various values of the concentration parameter $\kappa$. The curves show that, for $\kappa = 0$, the distribution reduces to the uniform distribution on the unit sphere, whereas when $\kappa \rightarrow \infty$, the probability mass concentrates more and more around the zero angle.}
    \label{fig:von_mises}
\end{figure}

We next equip $\mathcal{X}$ with the metric $d(x, y) = \sqrt{1 - \cos(x - y)}$, see Lemma~\ref{lem:von_mises_triangle_ineq} in the supplementary material for the proof that $d$ satisfies the triangle inequality. By the preceding discussion, it is reasonable to expect that $\kappa \mapsto O(P_{\kappa})$ is maximized at the uniform distribution, $\kappa = 0$, and the following result shows that this is indeed the case.

% \begin{theorem}\label{theo:von_mises_case}
%     We have $O(P_{\kappa}) \in [1/2, 2/3]$ for all $\kappa \geq 0$. Moreover, $O(P_{0}) = 2/3$ and $O(P_{\kappa}) \rightarrow 1/2$ as $\kappa \rightarrow \infty$.
% \end{theorem}

\begin{theorem}\label{theo:von_mises_case}
    The map $\kappa \mapsto O(P_{\kappa})$ is decreasing in $(0, \infty)$. Moreover, $O(P_{0}) = 2/3$ and $O(P_{\kappa}) \rightarrow 1/2$ as $\kappa \rightarrow \infty$.
\end{theorem}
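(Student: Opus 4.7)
The plan is to obtain a closed-form expression for $O(P_\kappa)$ in terms of the Bessel coefficients $c_k := I_k(\kappa)/I_0(\kappa)$ and then analyze the three claims (boundary values and monotonicity) separately.

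Since the centered von Mises density is symmetric about $0$, we have $E\{\sin(kX)\} = 0$ and $E\{\cos(kX)\} = c_k$. Writing $d(x,y)^2 = 1 - \cos(x - y)$ and applying the product-to-sum identity together with the independence of $X_1, X_2, X_3$, I would first compute
$$E\{d(X_1, X_2)^2 d(X_1, X_3)^2\} = 1 - \frac{3}{2} c_1^2 + \frac{1}{2} c_1^2 c_2, \quad E\{d(X_1, X_2)^4\} = \frac{3}{2} - 2 c_1^2 + \frac{1}{2} c_2^2,$$
yielding $O(P_\kappa) = (2 - 3 c_1^2 + c_1^2 c_2)/(3 - 4 c_1^2 + c_2^2)$. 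The Bessel recurrence $I_0(\kappa) - I_2(\kappa) = (2/\kappa) I_1(\kappa)$ gives $c_2 = 1 - 2 c_1/\kappa$, reducing everything to a single function $c_1(\kappa)$. The small-$\kappa$ asymptotic $c_1 \sim \kappa/2$ then yields $c_1/\kappa \to 1/2$, $c_1^2/\kappa^2 \to 1/4$, and $O(P_0) = 2/3$, whereas the large-$\kappa$ asymptotic $c_1 \sim 1 - 1/(2\kappa)$ makes both numerator and denominator vanish at order $1/\kappa^2$ with a ratio of $1/2$, giving $O(P_\kappa) \to 1/2$.

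For the monotonicity, I would differentiate $O(P_\kappa) = N(\kappa)/D(\kappa)$ using $c_1'(\kappa) = 1 - c_1^2 - c_1/\kappa$ (which follows from $I_0' = I_1$ and $I_1' = (I_0 + I_2)/2$) and the companion $c_2'(\kappa) = 2(c_1^2 - c_2)/\kappa$. After regrouping,
$$N' D - N D' = 2 c_1 c_1' \bigl[(c_2 - 3) D + 4 N\bigr] + c_2' \bigl[c_1^2 D - 2 c_2 N\bigr].$$
The key algebraic miracle is the identity $(c_2 - 3) D + 4 N = (c_2 - 1)^3$, which together with $c_2 - 1 = -2 c_1/\kappa$ collapses the first summand to $-16 c_1^4 c_1'/\kappa^3$. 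A companion identity $c_1^2 D - 2 c_2 N = -4[c_1'^2 - c_1^2 (1 - c_1^2)/\kappa^2]$ and the substitution $A = c_1'$, $B = c_1/\kappa$ (so that the ODE reads $A + B + c_1^2 = 1$) then reduce the whole expression to
$$N'D - ND' = -\frac{8}{\kappa}\bigl\{A^2(B - A) + B^2(c_2 - c_1^4)\bigr\}.$$

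Strict negativity on $(0, \infty)$ thus reduces to two inequalities on modified Bessel functions: first, $B - A > 0$ is equivalent via $c_2 = 1 - 2c_1/\kappa$ to $c_1^2 > c_2$, i.e.\ the strict Turán inequality $I_1^2 < I_0 I_2$ for $\kappa > 0$; second, $c_2 > c_1^4$, equivalently $I_2 I_0^3 > I_1^4$, follows by combining $I_1^4 < I_0^2 I_2^2$ (Turán squared) with $I_0 > I_2$ (itself a consequence of $I_0 - I_2 = (2/\kappa) I_1 > 0$). Both inequalities are strict for $\kappa > 0$, so the bracket is strictly positive and $O'(\kappa) < 0$. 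The main obstacle is precisely this final compact reformulation: expanded naively, $N'D - ND'$ is a six-term polynomial whose sign is far from transparent, and it is the identity $(c_2 - 3)D + 4N = (c_2 - 1)^3$ that unlocks the clean end form used above.
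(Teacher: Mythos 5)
Your closed form $O(P_\kappa) = (2 - 3c_1^2 + c_1^2 c_2)/(3 - 4c_1^2 + c_2^2)$ agrees with the paper's expression \eqref{eq:g_p_kappa} (divide through by $I_0^3$), and both boundary evaluations are fine. The algebraic reduction of $N'D - ND'$ is also correct: I verified that $(c_2-3)D + 4N = (c_2-1)^3$, that the companion identity holds, and that the final form $-\tfrac{8}{\kappa}\{A^2(B-A) + B^2(c_2 - c_1^4)\}$ is right. The problem is the very last step. First, a sign confusion: $B - A > 0$ is equivalent to $c_1^2 > c_2$, i.e.\ to $I_1^2 > I_0 I_2$, which \emph{is} the Tur\'an inequality for modified Bessel functions of the first kind; you state it as $I_1^2 < I_0 I_2$, which is false for every $\kappa > 0$ (check $\kappa$ small: $I_1^2 \sim \kappa^2/4$ versus $I_0 I_2 \sim \kappa^2/8$). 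That term survives once the sign is fixed. Second, and more seriously, your proof of $c_2 > c_1^4$ collapses: you invoke ``$I_1^4 < I_0^2 I_2^2$ (Tur\'an squared),'' but squaring the true Tur\'an inequality gives $I_1^4 > I_0^2 I_2^2$, so your chain $I_1^4 < I_0^2 I_2^2 < I_0^3 I_2$ runs in the wrong direction and proves nothing. The inequality $I_0^3 I_2 > I_1^4$ does appear to be true, but it is genuinely delicate: writing $c_2 - c_1^4 = 2c_1' - (1 - c_1^2)^2$ and using $c_1 = 1 - 1/(2\kappa) - 1/(8\kappa^2) - \cdots$ one finds $c_2 - c_1^4 \sim 1/(2\kappa^3)$ as $\kappa \to \infty$, so it cannot follow from any crude combination of Tur\'an and $I_0 > I_2$ (both sides of the chain dominate $I_0^2 I_2^2$, which decides nothing); it needs its own argument, e.g.\ sharp two-sided bounds on the ratio $I_2/I_1$. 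Until that inequality is established, the monotonicity claim is not proved.

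For comparison, the paper avoids this entirely by factoring $O(P_\kappa) - 1/2 = \tfrac{1-y}{2}\cdot\tfrac{1 - 2x^2 + y}{3 - 4x^2 + y^2}$ with $x = I_1/I_0$, $y = I_2/I_0$, observing that the first factor is nonnegative and decreasing (monotonicity of Bessel ratios), and proving the second factor decreasing via the power-series representations of $I_0^2$, $I_1^2$, $I_0 I_2$, $I_2^2$ together with the lemma that a ratio of power series whose coefficient ratios $a_n/b_n$ decrease is itself decreasing. Your derivative route is an attractive alternative and the identity $(c_2-3)D+4N=(c_2-1)^3$ is a genuinely nice find, but to complete it you must either supply a real proof of $I_0(\kappa)^3 I_2(\kappa) > I_1(\kappa)^4$ or fall back on the series argument.
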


Theorem \ref{theo:von_mises_case} reveals that (a) the least-dimensional member of the family $P_\kappa$ (in the sense of having the smallest possible $O(P_{\kappa})$) is the point mass distribution achieved in the limit $\kappa \rightarrow \infty$, and (b) the upper endpoint $2/3$ of the range of $O(P_{\kappa})$ is achieved at the uniform distribution. Hence, a test of uniformity on the unit circle can be devised based on $O(P_{\kappa, n})$, whose null distribution we derive next.

\begin{theorem}\label{theo:von_mises_case_h0}
    We have, as $n \rightarrow \infty$,
    \begin{align*}
        n \left\{ \frac{2}{3} - O(P_{0, n}) \right\} \rightsquigarrow \frac{2}{18} \chi_2^2 + \frac{1}{18} \chi_2^2,
    \end{align*}
    where the two $\chi^2_2$-variates are independent.
\end{theorem}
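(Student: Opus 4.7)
The plan is to reduce everything to two-dimensional Euclidean calculations via the Fourier-type embedding $Y_i = (\sqrt{2}\cos X_i, \sqrt{2}\sin X_i) \in \mathbb{R}^2$. Under this map the squared metric becomes $d(X_i, X_j)^2 = 1 - \cos(X_i - X_j) = 1 - \tfrac{1}{2}\langle Y_i, Y_j \rangle$, and plugging this identity into the numerator and denominator of \eqref{eq:main_concept_sample}, expanding, and collecting sums into $\bar Y_n = n^{-1}\sum_i Y_i$ and $S_n = n^{-1}\sum_i Y_i Y_i^{\mathrm{T}}$ gives the closed forms
\begin{align*}
    \tfrac{1}{n^3}\textstyle\sum_{i,j,k} d(X_i,X_j)^2 d(X_i,X_k)^2 = 1 - \|\bar Y_n\|^2 + \tfrac{1}{4} \bar Y_n^{\mathrm{T}} S_n \bar Y_n, \quad \tfrac{1}{n^2} \textstyle\sum_{i,j} d(X_i,X_j)^4 = 1 - \|\bar Y_n\|^2 + \tfrac{1}{4} \mathrm{tr}(S_n^2).
\end{align*}
I would next exploit the algebraic identity $Y_{i,1}^2 + Y_{i,2}^2 \equiv 2$, which forces $\mathrm{tr}(S_n) = 2$ almost surely, hence $\mathrm{tr}(S_n^2) = 2 + \mathrm{tr}((S_n - I_2)^2)$ with $S_n - I_2$ traceless symmetric. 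This is the pivotal cancellation: it removes the $O_P(n^{-1/2})$ piece from the denominator and is the reason the scaling in the final statement is $n$ rather than $\sqrt{n}$.

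From there, combining the two closed forms with $\bar Y_n^{\mathrm{T}}(S_n - I_2) \bar Y_n = O_P(n^{-3/2})$, I would show by direct algebra that
\begin{align*}
    \tfrac{2}{3}\cdot\tfrac{1}{n^2}\textstyle\sum_{i,j} d(X_i,X_j)^4 - \tfrac{1}{n^3}\textstyle\sum_{i,j,k} d(X_i,X_j)^2 d(X_i,X_k)^2 = \tfrac{1}{12}\|\bar Y_n\|^2 + \tfrac{1}{6} \mathrm{tr}((S_n - I_2)^2) + O_P(n^{-3/2}),
\end{align*}
where the coefficients $-2/3$ and $+3/4$ in front of $\|\bar Y_n\|^2$ from the two V-statistics combine to $1/12$. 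Since the denominator of $O(P_{0,n})$ converges in probability to $3/2$, Slutsky's lemma reduces the claim to identifying the joint limit of $(n\|\bar Y_n\|^2,\; n\,\mathrm{tr}((S_n - I_2)^2))$.

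For the joint limit I would use the standard multivariate CLT. Under $P_0$ the pair $(\cos X, \sin X)$ has mean zero and covariance $\tfrac{1}{2} I_2$, giving $\sqrt{n}\bar Y_n \rightsquigarrow \mathcal{N}(0, I_2)$ and hence $n\|\bar Y_n\|^2 \rightsquigarrow \chi_2^2$. Parametrising the traceless matrix $S_n - I_2$ by $(A_n, B_n) = (n^{-1}\sum_i \cos 2X_i,\; n^{-1}\sum_i \sin 2X_i)$ via $Y_i Y_i^{\mathrm{T}} - I_2 = \bigl(\begin{smallmatrix}\cos 2X_i & \sin 2X_i \\ \sin 2X_i & -\cos 2X_i\end{smallmatrix}\bigr)$, one gets $\mathrm{tr}((S_n - I_2)^2) = 2(A_n^2 + B_n^2)$ together with $\sqrt{n}(A_n, B_n) \rightsquigarrow \mathcal{N}(0, \tfrac{1}{2} I_2)$, so $n\,\mathrm{tr}((S_n - I_2)^2) \rightsquigarrow \chi_2^2$. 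Orthogonality of the first and second circular harmonics under the uniform law makes all four cross-covariances between $\{\cos X, \sin X\}$ and $\{\cos 2X, \sin 2X\}$ vanish, which, by joint Gaussianity of the limit, forces the two $\chi_2^2$ variates to be independent. Assembling via the continuous mapping theorem gives $n\{2/3 - O(P_{0,n})\} \rightsquigarrow \tfrac{2}{3}(\tfrac{1}{12}\chi_2^2 + \tfrac{1}{6}\chi_2^2) = \tfrac{1}{18}\chi_2^2 + \tfrac{2}{18}\chi_2^2$, which matches the statement.

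The only genuinely delicate step is the cancellation argument: one has to notice that $\mathrm{tr}(S_n) \equiv 2$ (otherwise the denominator would dominate at rate $\sqrt{n}$) and that the prefactor $2/3$ in front of $D_n$ is precisely what annihilates the residual linear contribution in $\|\bar Y_n\|^2$. Once these cancellations are in place, the rest is a clean application of the CLT, the continuous mapping theorem, and the harmonic orthogonality that yields independence of the two chi-squares.
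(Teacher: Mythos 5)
Your proposal is correct, and it arrives at the same quadratic form as the paper but by a different route. The paper writes $O(P_{0,n})$ as an explicit function $h(a,b,c,e)$ of the empirical moments $(a,b,c,e)=n^{-1}\sum_i(\cos X_i,\cos^2 X_i,\sin X_i,\cos X_i\sin X_i)$, checks that $\nabla h$ vanishes at the population value $(0,1/2,0,0)$, and then invokes the second-order delta method with the Hessian $H=\mathrm{diag}(-2/9,-16/9,-2/9,-16/9)$; the $\chi^2_2$ weights $1/18$ and $2/18$ fall out of $\tfrac12\Sigma^{1/2}H\Sigma^{1/2}$ with $\Sigma=\mathrm{diag}(1/2,1/8,1/2,1/8)$. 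You instead embed into $\mathbb{R}^2$ via $Y_i=(\sqrt{2}\cos X_i,\sqrt{2}\sin X_i)$, obtain closed forms for the two V-statistics in terms of $\bar Y_n$ and $S_n$, and make the degeneracy of the first-order term visible by hand: the identity $\mathrm{tr}(S_n)\equiv 2$ kills the $O_P(n^{-1/2})$ fluctuation of the denominator, and the prefactor $2/3$ cancels the residual linear part, leaving $\tfrac{1}{12}\|\bar Y_n\|^2+\tfrac16\mathrm{tr}\{(S_n-I_2)^2\}+O_P(n^{-3/2})$. Your statistics $(\bar Y_n,A_n,B_n)$ are just a linear reparametrization of the paper's $(a,c,2b-1,2e)$ into first and second circular harmonics, so the two arguments are equivalent in substance; what your version buys is transparency (the block structure explaining the independence of the two $\chi^2_2$ variates and the weights $1/18$, $2/18$ is immediate from harmonic orthogonality, rather than emerging from a diagonal Hessian), at the cost of having to justify the exact cancellations explicitly rather than letting the delta method absorb them. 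All the computations you sketch check out: the closed forms for numerator and denominator, the coefficient $-2/3+3/4=1/12$, the limits $n\|\bar Y_n\|^2\rightsquigarrow\chi^2_2$ and $n\,\mathrm{tr}\{(S_n-I_2)^2\}\rightsquigarrow\chi^2_2$, and the final assembly $\tfrac23(\tfrac{1}{12}\chi^2_2+\tfrac16\chi^2_2)=\tfrac{1}{18}\chi^2_2+\tfrac{2}{18}\chi^2_2$.
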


Up to our best knowledge, the hypothesis test corresponding to the null distribution in Theorem \ref{theo:von_mises_case_h0} appears to be novel and not equivalent to any of the classical tests of uniformity, see \cite[Section 6.3]{mardia2000directional}. This claim is further backed up
by the fact that the test statistic $O(P_{0, n})$ has a reasonably complex expression as a function of the transformed variates $(\cos (X_i), \cos^2 (X_i), \sin (X_i), \cos (X_i) \sin (X_i))$, see the proof of Theorem \ref{theo:von_mises_case_h0}. Finally, comparing the results of Theorems \ref{theo:elliptical_case_h0} and \ref{theo:von_mises_case_h0} we also observe that the convergence rate of $O(P_{n})$ to its maximum depends on the particular scenario we are in: in the former case the rate is $\sqrt{n}$ and in the latter $n$.

\subsection{Compositional data}

Fix $p > 1$ and let $\Delta^p = \{ x \in \mathbb{R}^p \mid 0 < x_1, \ldots x_p < 1, \sum_{j = 1}^p x_j = 1 \}$ denote the $p$-dimensional unit simplex. Data residing in the unit simplex are studied in compositional data analysis \citep{pawlowsky2011compositional} and occur commonly, e.g., in biology. We equip $\Delta^p$ with the Aitchison metric,
\begin{align*}
    d^2(x, y) = \frac{1}{2p} \sum_{j = 1}^p \sum_{k = 1}^p \left\{ \log \left( \frac{x_j}{x_k} \right) - \log \left( \frac{y_j}{y_k} \right) \right\}^2,
\end{align*}
which is arguably the most commonly used distance in compositional data analysis. As is typical for compositional data, where the observations sum to unity, the Aitchison distance depends on the observed vector $x$ only through the ratios of its components to one another, ignoring their absolute size.

Fix now a distribution $F_Z$ taking values on the positive real line such that $\log Z$ is symmetrically distributed around zero for $Z \sim F_Z$ . For $\mu = (\mu_1, \ldots, \mu_p)^{\mathrm{T}} \in \mathbb{R}^p$, $\mu_1, \ldots, \mu_p > 0$, and $\theta = (\theta_1, \ldots , \theta_p)^{\mathrm{T}} \in \mathbb{R}^p$, $\theta_1, \ldots, \theta_p \geq 0$, we let $P_{\mu, \theta}$ denote the distribution of the random composition
\begin{align*}
    X = (X_1, \ldots, X_p) = \left( \frac{\mu_1 Z_1^{\theta_1}}{\mu_1 Z_1^{\theta_1} + \cdots + \mu_p Z_p^{\theta_p}}, \ldots , \frac{\mu_p Z_p^{\theta_p}}{\mu_1 Z_1^{\theta_1} + \cdots + \mu_p Z_p^{\theta_p}} \right),
\end{align*}
where $Z_1, \ldots, Z_p$ are a random sample from $F_Z$. The distribution $P_{\mu, \theta}$ is similar in spirit to the Dirichlet distribution which is generated in the above manner but with the $\mu_j Z_j^{\theta_j}$ replaced by independent $\mathrm{Gamma}(\theta_j, 1)$-variates.

A simple computation reveals that, for $X \sim P_{\mu, \theta}$, the map $h: \Delta^p \to \mathbb{R}$ defined as $h(a) = E\{ d^2(X, a) \}$ is minimized uniquely at the vector $a = \mu/(\mu_1 + \cdots +\mu_p) \in \Delta^p$. Hence, $\mu$ is a location parameter and essentially the Frech\'et mean of the distribution $P_{\mu, \theta}$. The parameter $\theta$, on the other hand, controls the dispersion of $P_{\mu, \theta}$. In Figure~\ref{fig:ternary} we have illustrated the effect of $\theta$ with ternary diagrams~\citep{pawlowsky2011compositional} when $p = 3$ and $\log Z \sim \mathcal{N}(0, 1)$. When $\theta$ is a constant vector (top row), the resulting contours are symmetric w.r.t. the center of the simplex, whereas non-constant $\theta$ (bottom row) results in elongation along the axes corresponding to the largest elements of $\theta$. The corresponding values of $O(P_{\mu, \theta})$ given in the caption of Figure \ref{fig:ternary} confirm the intuitive fact that, of the four distributions, the one in the bottom left panel is the least symmetric. We also note that, as with the Dirichlet distribution, certain values of the parameter $\theta$ lead into multimodal distributions (top right panel).

\begin{figure}
    \centering
    \includegraphics[width=0.65\textwidth]{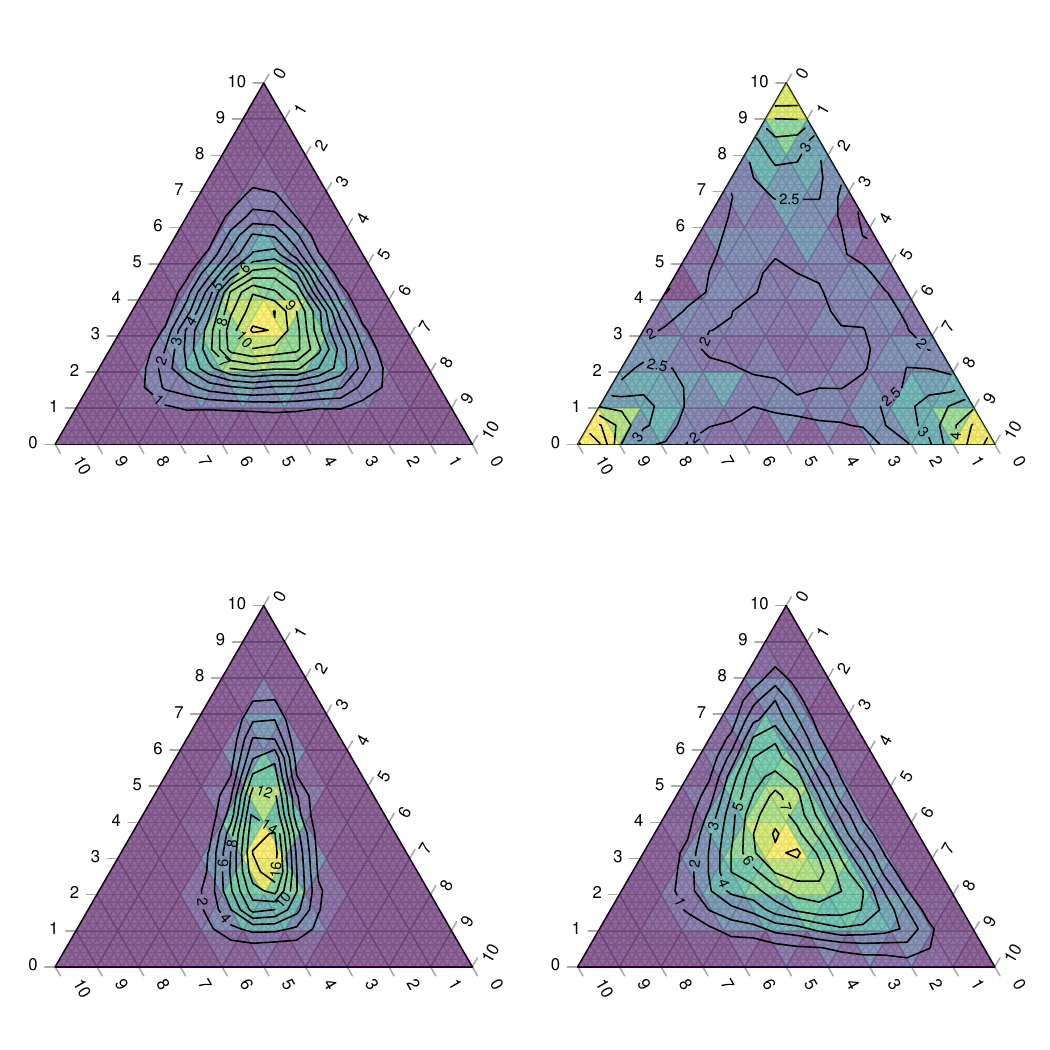}
    \caption{Density estimates of the distribution $P_{\mu, \theta}$ for $\mu = (1, 1, 1)^{\mathrm{T}}$ and four different values of $\theta$, based on samples of size $n = 10000$. The four panels correspond, from left to right, top to bottom, to the values $\theta = (0.5, 0.5, 0.5), (1.25, 1.25, 1.25), (0.75, 0.25, 0.25), (0.75, 0.75, 0.25)$. The measure $O(P_{\mu, \theta})$ takes values in $[0.5, 0.625]$ and in the four scenarios its population value is 0.625, 0.625, 0.547, 0.594, respectively. The plots were drawn using the \texttt{R}-package \texttt{Ternary} \citep{RSmith2017}. }
    \label{fig:ternary}
\end{figure}

The following result shows that $O(P_{\mu, \theta})$ is invariant to $\mu$ and essentially measures how close to a constant vector the dispersion parameter $\theta$ is.

\begin{theorem}\label{theo:compositional_case}
    Assume that $E\{ (\log Z)^4 \} < \infty$ and denote
    \begin{align*}
        u_Z = \frac{1}{2} + \frac{p(p - 2)}{(p - 1)(\gamma_Z + 2p + 1) + 4},
    \end{align*}
    where $\gamma_Z = E\{ (\log Z)^4 \}/[E\{ (\log Z)^2 \} ]^2$. Then, $(\mu, \theta) \mapsto O(P_{\mu, \theta})$ takes values in $[1/2, u_Z]$ and
    \begin{itemize} 
    \item [(i)] $O(P_{\mu, \theta}) = 1/2$ if and only if exactly one of $\theta_1, \ldots, \theta_p$ is non-zero.
    \item [(ii)] $O(P_{\mu, \theta}) = u_Z$ if and only if $\theta_1 = \cdots = \theta_p$.
\end{itemize}
\end{theorem}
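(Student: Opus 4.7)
The plan is to reduce the Aitchison-distance moments to an algebraic expression in the power sums $s_2 = \sum_j \theta_j^2$ and $s_4 = \sum_j \theta_j^4$, and then to analyse the resulting ratio as a M\"obius function of $t = s_4/s_2^2$. The first step is to observe that for independent copies $X, Y \sim P_{\mu, \theta}$ built from underlying $Z, Z'$, a direct calculation gives $\log(X_j/X_k) - \log(Y_j/Y_k) = \theta_j w_j - \theta_k w_k$ with $w_j = \log Z_j - \log Z_j'$; the $\mu$ components cancel, which already proves $\mu$-invariance, and
\begin{align*}
d^2(X, Y) = \sum_{j=1}^p v_j^2 - \frac{1}{p}\Bigl(\sum_{j=1}^p v_j\Bigr)^2 = v^{\mathrm{T}} H v
\end{align*}
where $v_j = \theta_j w_j$ and $H = I_p - p^{-1} 1_p 1_p^{\mathrm{T}}$ is the centering matrix.

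Next, setting $\sigma^2 = E\{(\log Z)^2\}$ and $m_4 = E\{(\log Z)^4\}$, the strategy will be to expand both $N = E\{d^2(X_1,X_2)\,d^2(X_1,X_3)\}$ and $D = E\{d^4(X_1,X_2)\}$ as fourfold index sums against the kernel $H_{jk}H_{lm}$. Since the $\log Z_j^{(r)}$ are mutually independent across samples $r$ and components $j$ and symmetric around zero, only those index patterns contribute in which every component index appears at least twice and, within each sample, each $\log Z_j^{(r)}$ appears an even number of times. A case split (``all four component indices equal'' versus the three ``two disjoint pairs'' patterns), together with $H_{jj}=(p-1)/p$ and $H_{jk}=-1/p$ for $j \neq k$, will yield closed forms $N = (\sigma^4/p^2)\{A_N s_4 + B_N s_2^2\}$ and $D = (\sigma^4/p^2)\{A_D s_4 + B_D s_2^2\}$ with
\begin{align*}
A_N &= (\gamma_Z - 1)(p-1)^2 - 2, & B_N &= 4(p-1)^2 + 2,\\
A_D &= 2(\gamma_Z + 1)(p-1)^2 - 8, & B_D &= 4(p-1)^2 + 8.
\end{align*}

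Finally, writing $t = s_4/s_2^2$ yields $O(P_{\mu,\theta}) = (A_N t + B_N)/(A_D t + B_D)$, a M\"obius function in $t$. The power-mean inequality gives $s_4 \geq s_2^2/p$ with equality iff all $\theta_j$ are equal, while the identity $s_2^2 - s_4 = \sum_{i \neq j} \theta_i^2 \theta_j^2 \geq 0$ gives $s_4 \leq s_2^2$ with equality iff exactly one $\theta_j$ is nonzero (using Assumption~\ref{assu:finite_fourth_moment}(ii) to exclude the degenerate case in which all $\theta_j$ vanish); hence $t \in [1/p, 1]$. A short expansion simplifies $A_N B_D - A_D B_N$ to $-4(\gamma_Z + 3)(p-1)^2 p(p-2) \leq 0$, so $O$ is a decreasing function of $t$ (strictly so for $p \geq 3$). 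Evaluating at $t = 1$ immediately gives $O = 1/2$, and evaluating at $t = 1/p$ yields $u_Z$ after verifying the identity $(\gamma_Z + 4p - 1)(p-1) + 2 = (p-1)(\gamma_Z + 2p + 1) + 4 + 2p(p-2)$. The main obstacle will be the bookkeeping in the moment computation: the three ``two pairs'' patterns produce contributions of two different orders (those weighted by $H_{jj}H_{ll}$ versus those weighted by $H_{jk}^2$), and keeping the resulting offsets straight is the chief source of potential error. Once the four coefficients are correctly tabulated, the remainder is pure rational algebra.
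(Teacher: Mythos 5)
Your proposal is correct and follows essentially the same route as the paper: both reduce $O(P_{\mu,\theta})$ to the same fractional-linear function of $\beta = s_4/s_2^2$ (your $t$) and finish with the bounds $1/p \le \beta \le 1$ from the power-mean/Cauchy--Schwarz inequalities together with monotonicity in $\beta$; your coefficients $A_N, B_N, A_D, B_D$ and the endpoint identity agree exactly with the expression derived in the paper. The only differences are organizational (you compute the moments via the quadratic form $v^{\mathrm{T}} H v$ and index pairing rather than the paper's power-sum bookkeeping, and you make the monotonicity step explicit by evaluating the determinant $A_N B_D - A_D B_N$, which the paper leaves to the reader).
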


The implications of Theorem \ref{theo:compositional_case} include; (i) $O(P_{\mu, \theta})$ achieves its maximal value when the components of the composition are exchangeable, leading to the symmetry observed in the top panels of Figure \ref{fig:ternary}. (ii) $O(P_{\mu, \theta}) = 1/2$ only when $X$ is essentially one-dimensional. In terms of ternary plots, such a distribution would appear as a one-dimensional curve on the simplex. (iii) Comparison of Theorems \ref{theo:elliptical_case} and \ref{theo:compositional_case} shows that their upper bounds and equality conditions share a certain resemblance. This is not a coincidence as the Aitchison distance between compositions $x, y \in \Delta^p$ is equal to the Euclidean distance between the isometric logratio (ilr) transformations of $x$ and $y$.

We next derive a test for the null hypothesis of symmetry, $\theta_1 = \cdots = \theta_p$. For simplicity, we compute the limiting null distribution only in the special case where $F_Z$ is the standard log-normal distribution, $\log(Z) \sim \mathcal{N}(0, 1)$. Equivalent results for other distributions $F_Z$ could be derived using the same proof techniques, by computing the relevant moments up to the eighth order, see the proof of Theorem \ref{theo:compositional_case_h0}.

\begin{theorem}\label{theo:compositional_case_h0}
    Let $\mu = (\mu_1, \ldots, \mu_p)^{\mathrm{T}} \in \mathbb{R}^p$, $\mu_1, \ldots, \mu_p > 0$, and $\theta_0 > 0$ be fixed, denote $1_p = (1, \ldots , 1)^{\mathrm{T}} \in \mathbb{R}^p$ and assume that $\log (Z) \sim \mathcal{N}(0, 1)$. Then,
    as $n \rightarrow \infty$,
    \begin{align*}
        \sqrt{n} \left\{ \frac{2p - 1}{2p + 2} - O(P_{\mu, \theta_0 1_p, n}) \right\} \rightsquigarrow  \mathcal{N} \left( 0, \frac{(p - 2)^2}{2 (p + 1)^3 (p - 1)} \right).
    \end{align*}
\end{theorem}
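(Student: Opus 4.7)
The plan is to recognize that, under the null $\theta = \theta_0 1_p$, the compositional setup collapses to the Euclidean Gaussian setup already covered by Theorem~\ref{theo:elliptical_case_h0}, and then to specialize the resulting asymptotic variance.

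First, I would establish the following reduction. Under $\theta = \theta_0 1_p$ we have $\log(X_j/X_k) = \log(\mu_j/\mu_k) + \theta_0 (\log Z_j - \log Z_k)$, so the $\mu$-terms cancel pairwise, and a short manipulation of the Aitchison formula yields
\begin{align*}
    d^2(X, Y) = \theta_0^2 \, \| H (\log Z^X - \log Z^Y) \|^2,
\end{align*}
where $H = I_p - p^{-1} 1_p 1_p^{\mathrm{T}}$ is the centering projector and $\| \cdot \|$ is the Euclidean norm on $\mathbb{R}^p$. Since $\log Z \sim \mathcal{N}(0, 1)$, the vector $H \log Z^X$ is $\mathcal{N}(0, H)$, and an isometric change of basis onto the orthogonal complement of $1_p$ identifies its distribution with $\mathcal{N}(0, I_{p - 1})$ on $\mathbb{R}^{p - 1}$. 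Because $O(P_n)$ is homogeneous of degree zero in the distances, the prefactor $\theta_0^2$ has no effect, and so $O(P_{\mu, \theta_0 1_p, n})$ has the same law as $O(Q_n)$, where $Q_n$ is the empirical distribution of $n$ i.i.d.\ $\mathcal{N}(0, I_{p - 1})$ vectors in $\mathbb{R}^{p - 1}$ equipped with the Euclidean metric.

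Second, the reduced problem is a special case of Theorem~\ref{theo:elliptical_case_h0} with dimension $p - 1$, $\mu = 0$, $\Sigma = I_{p - 1}$ and radial distribution $R^2 \sim \chi^2_{p - 1}$; the latter has moments of all orders, so the theorem applies. Using $E(R^{2 k}) = (p - 1)(p + 1) \cdots (p + 2 k - 3)$, one finds $\beta_R = (p + 1)/(p - 1)$ and hence $u_R = 1/2 + (p - 2)/(2 p + 2) = (2 p - 1)/(2 p + 2)$, matching the centering. The only remaining task is to evaluate the variance $\sigma_R^2$ from Theorem~\ref{theo:elliptical_case_h0} at these specific moments and to verify that it simplifies to $(p - 2)^2 / \{ 2 (p + 1)^3 (p - 1) \}$; this is pure algebra once the first four moments of $R^2$ are substituted into the rational expression for $\sigma_R^2$ produced in the proof of Theorem~\ref{theo:elliptical_case_h0}.

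The main obstacle is precisely this final algebraic simplification. The quantity $\sigma_R^2$ arises from applying the delta method to the map $(a, b) \mapsto a / b$ at the limits of the numerator and denominator $V$-statistics comprising $O(P_n)$, and combines fourth- through eighth-order moments of $R$ in a lengthy rational expression; the cancellations that are special to $R^2 \sim \chi^2_{p - 1}$ are what ultimately yield the stated clean answer. The degeneracy at $p = 2$ gives a useful consistency check: the reduced distribution then lives on $\mathbb{R}^1$, so by Theorem~\ref{theo:geometric_characterization}(i) the sample object shape $O(Q_n)$ is identically $1/2 = (2 \cdot 2 - 1)/(2 \cdot 2 + 2)$, which is exactly consistent with the asymptotic variance in the statement vanishing at $p = 2$.
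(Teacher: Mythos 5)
Your reduction is correct and the route is genuinely different from the paper's. The paper proves this result by a direct delta-method computation in the compositional coordinates: it expands the $V$-statistics in terms of the Gaussian array $a_{ij}$, computes sixth- and eighth-order moments of the quadratic forms $A^{\mathrm{T}} Q A$ via an orthogonal decomposition, and differentiates a rational map of $(\mathrm{vec}(S_n), m_n)$ from scratch. You instead observe that under the null the Aitchison distance equals $\theta_0^2 \|H(\log Z^X - \log Z^Y)\|^2$ with $H = I_p - p^{-1}1_p1_p^{\mathrm{T}}$, that $H \log Z^X$ is an isometric image of a $\mathcal{N}_{p-1}(0, I_{p-1})$ vector when $\log Z \sim \mathcal{N}(0,1)$, and that $O$ is scale-invariant in the distances, so the whole theorem is the $(p-1)$-dimensional Gaussian instance of Theorem~\ref{theo:elliptical_case_h0} with $R^2 \sim \chi^2_{p-1}$. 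This is consistent with the paper's own remark that the Aitchison distance is the Euclidean distance between ilr transforms, and it buys a much shorter proof; the price is that it exploits sphericity of $H\log Z$ and hence is specific to Gaussian $\log Z$, whereas the paper's direct computation is the template it advertises for general $F_Z$. The one step you leave unexecuted --- substituting the $\chi^2_{p-1}$ moments into $\sigma_R^2$ --- does check out: writing $q = p-1$ and $m_r = E(R^r)$, one has $m_2 = q$, $m_4 = q(q+2)$, $m_6 = q(q+2)(q+4)$, $m_8 = q(q+2)(q+4)(q+6)$, so $q(m_4 + m_2^2) + 2m_2^2 = 2q^2(q+2)$ and the bracket $4m_4^2(m_4 - m_2^2) - 4m_4m_2(m_6 - m_4m_2) + m_2^2(m_8 - m_4^2)$ collapses to $8q^3(q+2)$, giving
\begin{align*}
\sigma_R^2 = \frac{q^4(q-1)^2 \cdot 8q^3(q+2)}{16 q^8 (q+2)^4} = \frac{(q-1)^2}{2q(q+2)^3} = \frac{(p-2)^2}{2(p-1)(p+1)^3},
\end{align*}
as required; your $\beta_R = (p+1)/(p-1)$ and $u_R = (2p-1)/(2p+2)$ are likewise correct, and the $p=2$ degeneracy check is a nice sanity confirmation.
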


As with the circular data earlier, also the above test appears to be novel. However, its practical usefulness is limited by its parametric nature.

\subsection{Discrete metric in a finite space}

Fix $p \geq 3$, and let $\Theta^p$ denote the set of all vectors $\theta = (\theta_1, \ldots, \theta_p)^{\mathrm{T}} \in [0, 1]^p$ with elements summing to one and having at least two non-zero elements. For $\theta \in \Theta^p$, we denote by $P_\theta$ the discrete distribution in $\{1, \ldots, p\}$ taking the value $i$ with the probability $\theta_i$, $i = 1, \ldots, p$. Thus, $\Theta^p$ indexes the set of all non-degenerate probability distributions on a $p$-element set.

We equip the support set $\{1, \ldots, p \}$ with the discrete metric, $d(i, j) = 1 - \mathbb{I}(i = j)$, where $\mathbb{I}(\cdot)$ denotes the indicator function. While this metric is extremely simple, interestingly, it still leads to a meaningful and useful characterization of shape among the distributions $P_\theta$ via the quantity $O(P_\theta)$, as evidenced by the following theorem.

\begin{theorem}\label{theo:discrete_case}
    The map $\theta \mapsto O(P_{\theta})$ takes values in $[1/2, 1 - 1/p]$ and
    \begin{itemize} 
    \item [(i)] $O(P_{\theta}) = 1/2$ if and only if $\theta$ has exactly two non-zero elements.
    \item [(ii)] $O(P_{\theta}) = 1 - 1/p$ if and only if $\theta = (1/p) 1_p$.
\end{itemize}
\end{theorem}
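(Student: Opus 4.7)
The plan is to exploit that the squared discrete metric $d(i,j)^2 = 1 - \mathbb{I}(i = j)$ is $\{0,1\}$-valued, so the moments defining $O(P_\theta)$ reduce to polynomials in the power sums $s_r := \sum_{i=1}^p \theta_i^r$. Conditioning on $X_1$ and using the conditional independence of $X_2$ and $X_3$ given $X_1$,
\begin{align*}
    O(P_\theta) = \frac{E\{(1 - \theta_{X_1})^2\}}{E\{1 - \theta_{X_1}\}} = \frac{1 - 2 s_2 + s_3}{1 - s_2}.
\end{align*}

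Part (i) then follows from Theorem~\ref{theo:geometric_characterization}(i). Under the discrete metric all pairwise distances lie in $\{0,1\}$, and the only triple in $\{1,\ldots,p\}$ that violates every ordering of the triangle equality is the pairwise distinct one; hence three points reside on a line iff they are not all distinct. Therefore $O(P_\theta) = 1/2$ is equivalent to $\mathrm{pr}(X_1, X_2, X_3 \text{ pairwise distinct}) = 0$, i.e., to $\theta$ having support of size at most two. Combined with the standing assumption that $\theta \in \Theta^p$ has at least two non-zero entries, this gives exactly two.

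For part (ii), rearranging shows $O(P_\theta) \leq 1 - 1/p$ is equivalent to the algebraic inequality $(p+1) s_2 - p s_3 \geq 1$, with equality characterising the maximiser. My plan is a tangent-line argument applied to $f(x) := (p+1)x^2 - p x^3$ at the uniform value $x = 1/p$: a direct expansion yields the identity $f(x) - L(x) = -p (x - 1/p)^2 (x - (p-1)/p)$, where $L$ denotes the tangent to $f$ at $1/p$, and since $L$ is affine and $\sum_i \theta_i = 1$ one obtains $\sum_i L(\theta_i) = p\, f(1/p) = 1$. Consequently, whenever $\theta_i \leq (p-1)/p$ for every $i$, the factor $\theta_i - (p-1)/p$ is non-positive and so $f(\theta_i) \geq L(\theta_i)$ term-by-term, yielding $\sum_i f(\theta_i) \geq 1$. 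Equality then forces $\theta_i \in \{1/p, (p-1)/p\}$ for each $i$, and $\sum_i \theta_i = 1$ together with $p \geq 3$ singles out the uniform distribution.

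The main obstacle is the remaining case in which one component, say $\theta_1$, exceeds $(p-1)/p$, since the tangent bound then fails at $i = 1$; note that at most one such component can exist and each other $\theta_i$ lies in $[0, 1/p)$. I would handle this by changing variables to $u_i := p\theta_i - 1$, giving $\sum_i u_i = 0$, $u_1 \in (p - 2, p - 1)$ and $u_i \in [-1, 0)$ for $i \geq 2$. The target inequality becomes $\sum_i u_i^3 \leq (p-2)\sum_i u_i^2$, and, setting $b_i := -u_i \in (0, 1]$ for $i \geq 2$ with $\sum_{i>1} b_i = u_1$, this rearranges to
\begin{align*}
    u_1^2\bigl(u_1 - (p-2)\bigr) \leq (p-2) \sum_{i > 1} b_i^2 + \sum_{i > 1} b_i^3.
\end{align*}
Applying Cauchy--Schwarz to get $\sum_{i>1} b_i^2 \geq u_1^2/(p-1)$ and the power-mean inequality to get $\sum_{i>1} b_i^3 \geq u_1^3/(p-1)^2$, substituting into the right-hand side, and simplifying reduces the required inequality to $u_1 \leq p - 1$, which holds strictly because $\theta_1 < 1$. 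This yields $O(P_\theta) < 1 - 1/p$ strictly throughout the second case, completing (ii).
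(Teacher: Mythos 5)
Your proof is correct, and part (ii) takes a genuinely different route from the paper's. Part (i) is essentially the paper's argument (both directions via Theorem~\ref{theo:geometric_characterization}(i); the paper verifies the converse by direct computation instead, a cosmetic difference). For part (ii), the paper establishes the equivalent inequality $\sum_k \theta_k(1-\theta_k) \leq p\sum_k \theta_k^2(1-\theta_k)$ as a separate result (Lemma~\ref{lem:probability_lemma}), splitting at the same threshold $\theta_k \leq 1-1/p$ but handling the main case with a Chebyshev-type rearrangement argument for $f(x)=x(1-x)$ and the exceptional case with a chain of Cauchy--Schwarz bounds followed by a sign analysis of a univariate cubic $g(q)$. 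Your tangent-line identity $f(x)-L(x) = -p(x-1/p)^2\bigl(x-(p-1)/p\bigr)$ with $f(x)=(p+1)x^2-px^3$ dispatches the main case in one line and, unlike the paper's rearrangement argument, delivers the equality condition $\theta_i\in\{1/p,(p-1)/p\}$ immediately; your handling of the exceptional case, reducing via Cauchy--Schwarz and the power-mean inequality to the transparent condition $u_1 \leq p-1$, is also shorter and cleaner than the paper's. I verified the key identities: the reformulation $(p+1)s_2 - ps_3 \geq 1$ is indeed equivalent to $O(P_\theta)\leq 1-1/p$ (and to the paper's lemma), the substitution $u_i = p\theta_i - 1$ turns the target into $\sum_i u_i^3 \leq (p-2)\sum_i u_i^2$, and the final chain does collapse to $u_1\leq p-1$, which holds strictly since $\theta_1<1$. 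The only point worth making explicit in a write-up is the last step of the equality analysis in the main case: if $k$ components equal $(p-1)/p$ and the rest equal $1/p$, the constraint $\sum_i\theta_i=1$ forces $k(p-2)=0$, hence $k=0$ for $p\geq 3$; you gesture at this but should state it.
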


By Theorem \ref{theo:discrete_case}, $O(P_{\theta})$ measures the uniformity of $P_\theta$. The maximal value $1 - 1/p$ is reached precisely when each of the $p$ objects has exactly the same probability mass. Now, given a sample distribution $P_n$ and the associated observed relative frequencies $(n_1/n, \ldots , n_p/n)$, a classical way of testing the null hypothesis of uniformity is via Pearson's chi squared statistic,
\begin{align*}
    T_n = \sum_{i = 1}^p n p \left( \frac{n_i}{n} - \frac{1}{p} \right)^2,
\end{align*}
which satisfies $T_n \rightsquigarrow \chi^2_{p - 1}$ when $n \rightarrow \infty$. Our next result shows that the sample object shape is actually asymptotically equivalent to $T_n$ under the null hypothesis that $\theta = (1/p) 1_p$. Hence, in this simple case, the object shape recovers the optimal test of symmetry.

\begin{theorem}\label{theo:discrete_case_h0}
    For $\theta_0 = (1/p) 1_p$, we have, as $n \rightarrow \infty$,
    \begin{align*}
        \frac{p(p - 1)}{p - 2} n \left\{ 1 - \frac{1}{p} - O(P_{\theta_0, n}) \right\} = T_n + o_p(1) \rightsquigarrow \chi^2_{p - 1}.
    \end{align*}
\end{theorem}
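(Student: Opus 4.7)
The plan is to exploit the special structure of the discrete metric, under which $d(i,j)^2 = d(i,j)^4 = 1 - \mathbb{I}(i = j)$. First I would express the matrix $B_n$ from \eqref{eq:main_concept_sample} through the cell counts $n_a = |\{i : X_i = a\}|$. Since $\sum_j B_{n,ij} = n - n_{X_i}$, a direct computation gives $1_n^{\mathrm{T}} B_n^2 1_n = \sum_{a=1}^p n_a (n - n_a)^2$ and $\|B_n\|^2 = n^2 - \sum_{a=1}^p n_a^2$. Writing $\hat\theta_a = n_a/n$ and $S_k = \sum_{a=1}^p \hat\theta_a^k$, the sample object shape then reduces to the closed form
\[ O(P_n) = \frac{1 - 2 S_2 + S_3}{1 - S_2}, \]
which evaluates to $1 - 1/p$ at the uniform null, consistent with Theorem \ref{theo:discrete_case}.

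Next I would expand around the null by writing $\hat\theta_a = 1/p + \epsilon_a$, where $\sum_a \epsilon_a = 0$ and, by the multinomial central limit theorem, $\sqrt{n}\, \epsilon_a = O_p(1)$. Setting $V = \sum_a \epsilon_a^2 = O_p(n^{-1})$ and $W = \sum_a \epsilon_a^3 = O_p(n^{-3/2})$, a short algebraic expansion yields $S_2 = 1/p + V$ and $S_3 = 1/p^2 + 3V/p + W$. Substituting these into the closed form above and simplifying, the constant terms and the terms linear in $\epsilon_a$ cancel (the latter because $\sum_a \epsilon_a = 0$), leaving
\[ 1 - \frac{1}{p} - O(P_n) = \frac{V(p-2)/p - W}{(p-1)/p - V} = \frac{p-2}{p(p-1)}\, V + O_p(n^{-3/2}), \]
where the second equality comes from a geometric expansion of the denominator.

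Finally, multiplying by $n p (p-1)/(p-2)$ and recognising that $T_n = n p V$, I obtain
\[ \frac{p(p-1)}{p-2}\, n \left\{ 1 - \frac{1}{p} - O(P_{\theta_0, n}) \right\} = T_n + O_p(n^{-1/2}) = T_n + o_p(1), \]
and the asymptotic distribution then follows from the classical $T_n \rightsquigarrow \chi^2_{p-1}$ for Pearson's statistic under the uniform null. The main obstacle is really just careful bookkeeping rather than any conceptual difficulty: the cancellation of the linear-in-$\epsilon_a$ terms is essential, since it is what turns the typical $\sqrt{n}$-rate seen in Theorems \ref{theo:elliptical_case_h0} and \ref{theo:compositional_case_h0} into the $n$-rate required here, and one must verify that both the cubic term $W$ and the higher-order remainder from expanding $1/[(p-1)/p - V]$ only contribute at $o_p(1)$ after multiplication by $n$.
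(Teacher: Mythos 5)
Your proof is correct and follows essentially the same route as the paper's: both reduce $O(P_{\theta_0,n})$ to the closed form $(1-2S_2+S_3)/(1-S_2)$ in the empirical frequencies and expand around the uniform null, the $n$-rate coming from the exact vanishing of the linear terms (your $\sum_a \epsilon_a = 0$ is precisely the paper's observation that $1_p$ lies in the null space of the multinomial limit covariance, which it packages as a degenerate first- and second-order delta method). One small slip to fix: the leading coefficient in your penultimate display should be $\frac{p-2}{p-1}V$ rather than $\frac{p-2}{p(p-1)}V$ (divide $V(p-2)/p$ by $(p-1)/p$), which is the value consistent with your final, correct identification $n p V = T_n$.
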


\section{Peeling plot}\label{sec:peeling}

Our proposed tool of object data visualization, the peeling plot, admits two variants, maximization and minimization, and we next introduce them one-by-one.

Let $X_1, \ldots, X_n$ be an observed object sample in some metric space $(\mathcal{X}, d)$. We construct the maximization peeling plot of the sample as follows. Letting $P_{n, -i}$ denote the empirical distribution of the sample with the $i$th observation removed, we first identify the index $i = 1, \ldots, n$ for which $O(P_{n, -i})$ is maximized. Denoting this index by $j_1$, we then remove the $j_1$th observation from the sample and iteratively repeat this process. In the end, we obtain the vector of indices $(j_1, j_2, \ldots, j_n)$, giving the order in which the observations were peeled (removed), and the vector $o \in [1/2, 1]^n$ containing the successive values of the object shape produced by this process. Thus, in particular, $o_1 = O(P_{n, -j_1})$. The peeling plot is then the scatter plot of $o$ versus $1, \ldots, n$, with the accompanying index values $(j_1, j_2, \ldots, j_n)$. For discrete sample space $\mathcal{X}$, it might happen that at some stage of the peeling process we are left with a sample of identical objects, making the object shape undefined. To avoid such situations, we found it useful to carry out the process only until $80\%$ of the total sample size $n$ has been peeled. \texttt{R}-code for computing the peeling plot is available on the author's web page https://users.utu.fi/jomivi/software/.

\begin{figure}
    \centering
    \includegraphics[width=0.50\textwidth]{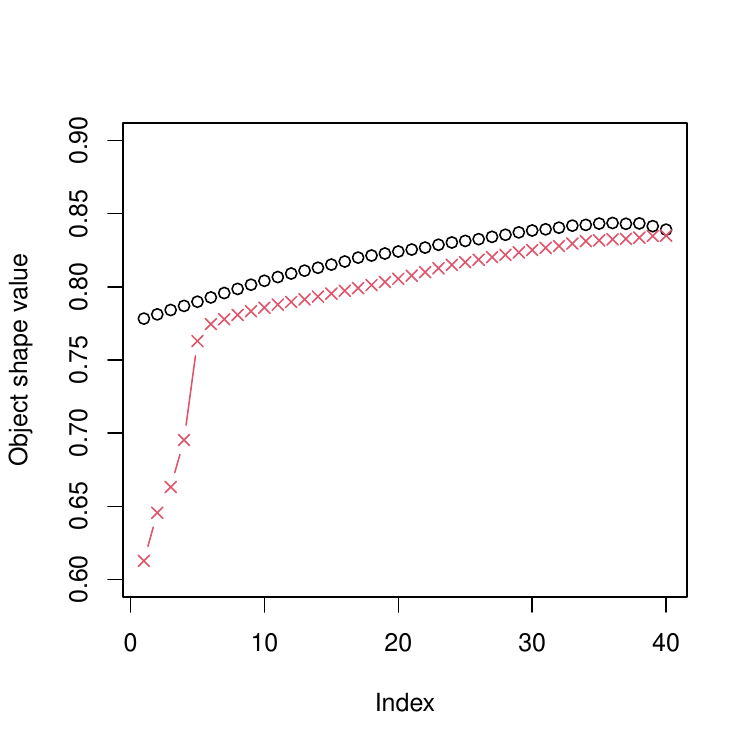}
    \caption{Examples of two superimposed maximization peeling plots. The red crosses correspond to a sample having 5 outliers, the effect of their peeling being clearly visible as a steep increase in the plot.}
    \label{fig:peeling_1}
\end{figure}

Based on Theorem \ref{theo:geometric_characterization}, this process attempts to find, in a greedy fashion, subsamples that are essentially as symmetric as possible, in the sense of Theorem~\ref{theo:geometric_characterization} and the metric $d$. While the finer interpretation of the plot is, in general, dependent on the particular setting at hand, we found that it is very useful in detecting outliers regardless of the metric space. This is because (a) having isolated outliers makes satisfying the condition in part (ii) of Theorem \ref{theo:geometric_characterization} very unlikely, and (b) the effect of outliers on $O(P)$ is rather strong due to the presence of the fourth moments of the distances in \eqref{eq:main_concept}, implying that they get removed first in the peeling. We have demonstrated this in Figure~\ref{fig:peeling_1}, which shows the peeling plots of a particular sample of size $n = 50$ without outliers (black circles) and another sample of size $n = 50$ with five outliers (red crosses). The main difference between the two curves is the sharp increase in the red curve, which represents the sudden rise in $O(P)$ as soon as all five outliers have been peeled from the data. 

We further investigated the outlier detection capabilities of the peeling plot in a simulation study. We took $(\mathcal{X}, d)$ to be the space of all positive definite $3 \times 3$ matrices equipped with the affine invariant metric \citep{bhatia2009positive}. We generated the observations as $X_i = U_i \mathrm{diag}\{\exp(z_{i1}), \exp(z_{i2}), \exp(z_{i3}) \} U_i^{\mathrm{T}}$ where the $3 \times 3$ orthogonal matrix $U_i$ is drawn uniformly w.r.t. the Haar measure and $z_{i1}, z_{i2}, z_{i3} \sim \mathcal{N}(\theta, 1)$, independently. The bulk of the data was generated using the value $\theta = 0$, whereas a small proportion $\varepsilon = 0.05, 0.10$ was taken to be outliers, generated either with $\theta = 2$ (less separated case) or $\theta = 4$ (more separated case). We considered the sample sizes $n = 20, 40, 60, 80, 100$ and in each replicate of the simulation attempted to detect the outlying observations using the following four methods: (1) The peeling plot where all observations preceding the largest jump in the plot are taken as outliers. (2) The ``oracle'' peeling plot where we label the first $n \varepsilon$ peeled observations as outliers. (3) Another oracle-type estimator which computes metric lens depths (a measure of non-outlyingness) of the sample as described in \cite{cholaquidis2020weighted, geenens2023statistical} and labels the $n \varepsilon$ objects with the smallest depths as outliers. (4) Similar as the previous method, but instead of knowing the exact amount of outliers, we order the metric lens depths of the sample in increasing order and label all points left of the single largest jump as outliers. Each of the four methods produces an index set of observations it labeled as outliers, and as our final evaluation criterion we use the F1-scores (harmonic mean of precision and recall) between these and the true set of outlier indices.

\begin{figure}
    \centering
    \includegraphics[width=0.75\textwidth]{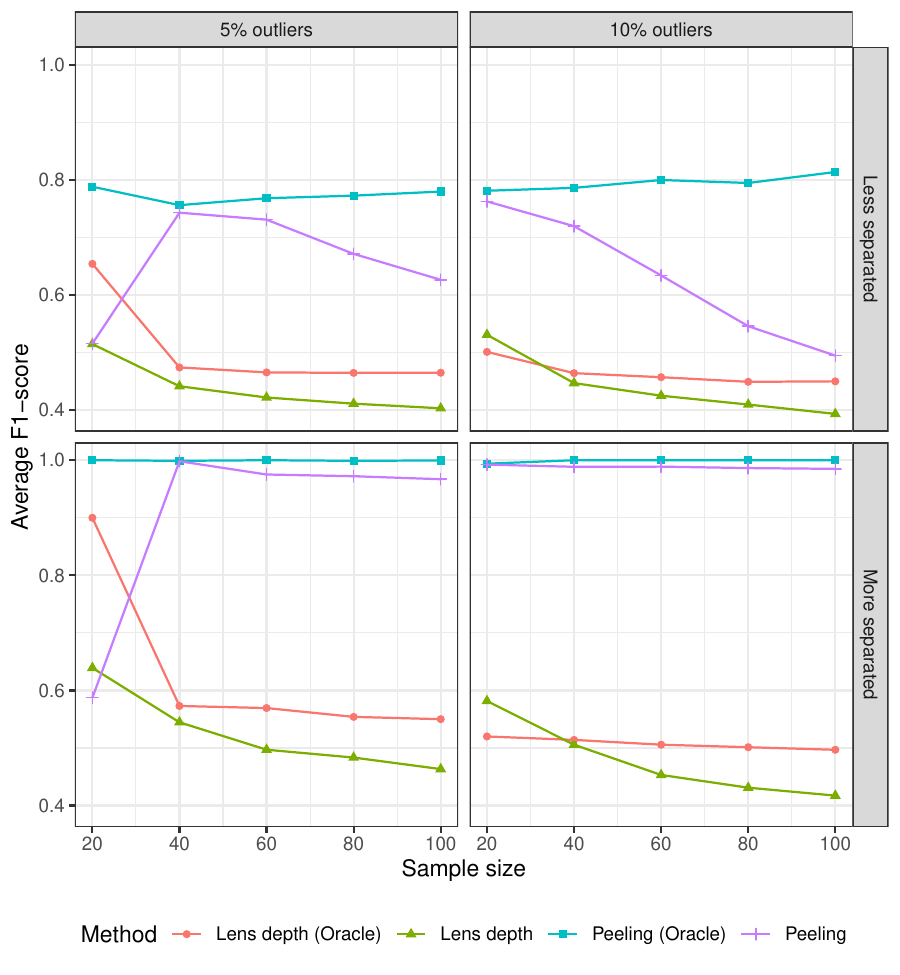}
    \caption{Average F1-scores in the outlier detection simulation, grouped according to sample size, proportion of outliers, separation of the groups and method.}
    \label{fig:outlier_simu}
\end{figure}

The average F1-scores over 500 replicates of the simulation are shown in Figure \ref{fig:outlier_simu}. The main implications of the plots are: (i) The oracle peeling plot gives the best performance, obtaining almost perfect F1-score when the outliers are sufficiently far away from the bulk (``More separated'' case). (ii) The lens depth, with or without oracle knowledge, gives subpar F1-scores. Closer inspection of the results reveals that the lens depth does identify many of the true outliers, but its F1-score is lowered by having large amounts of false positives interspersed with the actual outliers. The reason for this is likely the fact that lens depth is, unlike the object shape, a \textit{robust} statistical measure and as such not sensitive to the effects of any single observation, making it, in turn, less effective in identifying outliers. (iii) The F1-score of the peeling plot without oracle knowledge (purple curve with plus-signs) deteriorates with growing sample size $n$. This non-intuitive phenomenon is caused by the fact that our criterion for selecting the outlier set (location of single largest jump) is very ``local''. That is, having a larger sample size makes it more likely that the true outlier set is masked by the bulk purely by chance. Better alternatives could likely be devised with, e.g., bootstrapping strategies that control for the randomness in the bulk. Nevertheless, in scenarios with well-separated bulk and outliers, even the current heuristic criterion appears to work extremely well.

\begin{figure}
    \centering
    \includegraphics[width=0.60\textwidth]{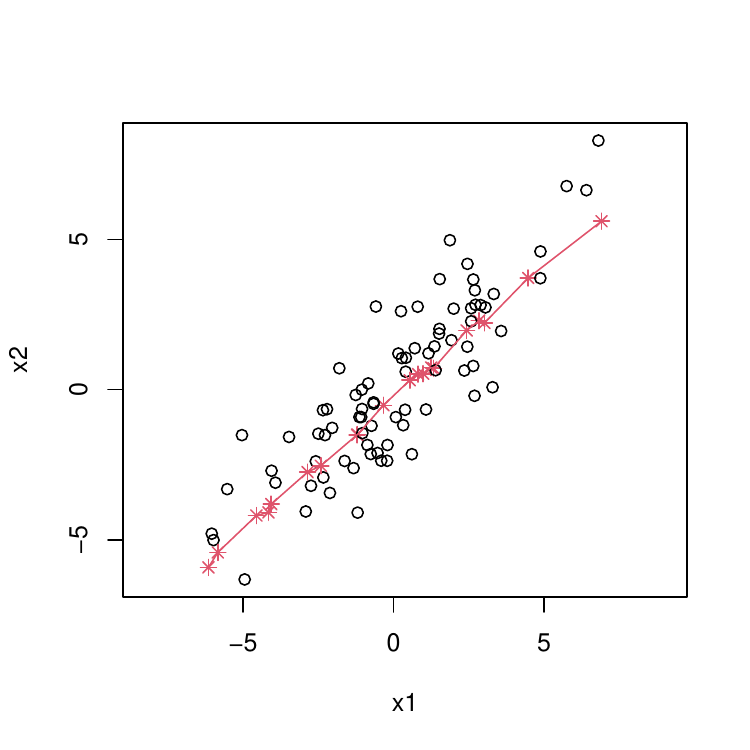}
    \caption{A simple bivariate dataset where the $n_0 = 20$ observations recovered by minimization peeling (red stars) recover the first principal component direction in the data.}
    \label{fig:pc1}
\end{figure}

We next turn our attention to the minimization peeling plot which is constructed in exact analogy to its maximization counterpart, by simply minimizing (instead of maximizing) the object shape at every peeling. By Theorem \ref{theo:geometric_characterization}, this process aims to find a subsample which is as concentrated onto a line (in the metric $d$) as possible. In analogy to classical PCA, if we stop the peeling when $n - n_0$ observations are peeled, the remaining $n_0$ objects thus form a set of representatives of the first principal component ``direction'' of the sample. We have demonstrated this in the case of Euclidean data in Figure \ref{fig:pc1}, where the $n_0 = 20$ red crosses which were left unpeeled in a sample of size $n = 100$ clearly capture the leading principal direction. In the plot we have further connected the crosses in an order which (approximately) minimizes the ``surplus'', $| d(x_1, x_2) + \cdots + d(x_{n_0-1}, x_{n_0}) - d(x_1, x_{n_0})|$, in the triangle inequality, using the implementation of the traveling salesman problem in the R-package \texttt{TSP} \citep{RTSP}.

\begin{figure}
    \centering
    \includegraphics[width=0.6\textwidth]{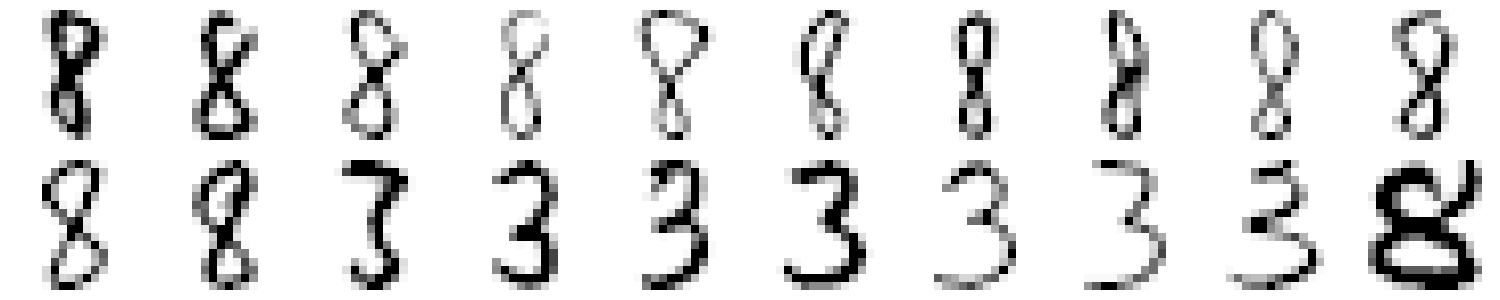}
    \caption{From left to right, top to bottom, the $n_0 = 20$ representatives of the first principal ``direction'' in a dataset of size $n = 100$ consisting of greyscale images of digits 3 and 8.}
    \label{fig:pc2}
\end{figure}

While the interpretation in the Euclidean setting in  Figure \ref{fig:pc1} is clear, in general metric spaces the situation can be more elaborate. For example, on a unit circle, the resulting line can ``join'' with itself by traversing around the circle. Nevertheless, the resulting set of $n_0$ observations can in any case be interpreted as an ordered sequence of objects corresponding to the single largest ``axis'' of variation in the data. To further demonstrate this, we applied minimization peeling to the hand-written digit data set available in the R-package \texttt{tensorBSS} \citep{RtensorBSS}. Each observation in the data is a greyscale image of size $16 \times 16$ and of these we took a random subsample of size $n = 100$ of digits 3 and 8 only. The $n_0 = 20$ representatives of the first principal direction extracted from this sample using minimization peeling with the Manhattan distance are shown in Figure \ref{fig:pc2}, ordered from left to right, top to bottom. The sequence of objects is dominated by the group structure separating the two digit classes. Within each group we further have a continuum corresponding to a smooth change in digit shape. For example, the threes transition from more thickly drawn digits into thinner ones. Finally, at the very end we have a single outlying digit 8 drawn differently from the rest, making it resemble more a three than an eight and lie on the other side of the threes group.

\section{Discussion}

A natural continuation of this work would be to investigate how efficient the hypothesis tests obtained using object shape are compared to their parametrically optimal counterparts. It is clear that, as a ``payment'' for the generality of our proposed concept, the obtained procedures cannot be expected to be optimal, but how much efficiency exactly is lost should be studied. As remarked in Section \ref{sec:cases}, in the discrete case we indeed recover the optimal test, but this is likely an artifact of the extreme simplicity of the scenario.

A question which we have ignored thus far is whether the extremal value $O(P) = 1$ can actually be reached in practice. This is indeed possible under very extreme scenarios. Take, e.g., a uniform distribution on the unit circle in $\mathbb{R}^2$ equipped with the railroad metric (all paths between points on distinct origin-centered rays go through the origin). Then, $d(X_1, X_2) = d(X_2, X_3) = d(X_3, X_1) = 1$ almost surely, giving $O(P) = 1$ by part (ii) of Theorem~\ref{theo:geometric_characterization}.

% Finally, the proof of Theorem \ref{theo:von_mises_case} shows that the map $\kappa \mapsto O(P_{\kappa})$ is a ratio of polynomials of modified Bessel functions of the first kind. Plotting this function shows that, in addition to what we showed in Theorem \ref{theo:von_mises_case}, it is also strictly decreasing in $[0, \infty)$. However, proving this fact turned out to be surprisingly difficult, the standard references on modified Bessel functions of the first kind, e.g., \cite{watson1983statistics}, focusing mostly on the properties of their linear functions.

\section*{Acknowledgement}
This work was supported by the Academy of Finland (grants no. 347501, 353769). The author wishes to thank H. Saarinen for a discussion regarding modified Bessel functions of the first kind.

\appendix

\section{Proofs of theoretical results}\label{sec:proofs}

\begin{proof}[Proof of Theorem \ref{theo:geometric_characterization}]
For part (i), denoting $d_{ij} = d(X_i, X_j)$, we use the reverse triangle inequality to conclude that
\begin{align}\label{eq:reverse_1}
    d_{23}^2 \geq | d_{12} - d_{13} |^2,
\end{align}
where equality is reached if and only if either $d_{12} = d_{23} + d_{31}$ or $d_{13} = d_{12} + d_{23}$. Rearranging \eqref{eq:reverse_1} we get,
\begin{align}\label{eq:reverse_2}
    2 d_{12} d_{13} \geq d_{12}^2 + d_{13}^2 - d_{23}^2,
\end{align}
Now, the triangle inequality gives $d_{12}^2 + d_{13}^2 - d_{23}^2 \geq - 2 d_{12} d_{13}$, implying that squaring \eqref{eq:reverse_2} and taking expectation gives $3 E (d_{12}^4) \leq 6 E ( d_{12}^2 d_{13}^2 )$, where equality is reached if and only if,
\begin{align*}
    2 d_{12} d_{13} = | d_{12}^2 + d_{13}^2 - d_{23}^2 |
\end{align*}
almost surely. The case of a positive sign leads to the reverse triangle inequality \eqref{eq:reverse_2} and the case of a negative sign occurs if and only if $(d_{12} + d_{13})^2 = d_{23}^2$, i.e., $d_{12} + d_{13} = d_{23}$. Thus equality is reached in $E (d_{12}^4) \leq 2 E (d_{12}^2 d_{13}^2)$ if and only if the three points reside almost surely on a line.

For part (ii), the power mean inequality gives,
\begin{align*}
    \frac{d_{12}^4 + d_{23}^4 + d_{31}^4}{3} \geq \left( \frac{d_{12}^2 + d_{23}^2 + d_{31}^2}{3} \right)^2,
\end{align*}
where equality is reached if and only if $d_{12} = d_{23} = d_{31}$. Taking the expectation shows that,
\begin{align*}
    E ( d_{12}^4 ) \geq \frac{1}{9} \{ 3 E ( d_{12}^4 ) + 6 E ( d_{12}^2 d_{13}^2 ) \},
\end{align*}
where equality is reached if and only if $d_{12} = d_{23} = d_{31}$ almost surely. This proves part (ii).

\end{proof}

\begin{lemma}\label{lem:moments_sphere}
Let $U$ have uniform distribution on the unit sphere $\mathbb{S}^{p - 1}$. Then,
\begin{align*}
    E(U \otimes U) &= \frac{1}{\sqrt{p}} v_p \\
    E(U U^{\mathrm{T}} \otimes U U^{\mathrm{T}}) &= \frac{1}{p(p + 2)}(I_{p^2} + K_{p,p} + p v_p v_p^{\mathrm{T}}),
\end{align*}
where $K_{p,p}$ is the $(p, p)$-commutation matrix and $v_p = \mathrm{vec}(I_p)/\sqrt{p}$.
\end{lemma}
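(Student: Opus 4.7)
The plan is to reduce both claims to elementary moment calculations for the uniform distribution on $\mathbb{S}^{p-1}$. For the first identity, I would observe that $E(U \otimes U) = \mathrm{vec}(E(UU^{\mathrm{T}}))$. Rotational invariance of $U$ implies $Q E(UU^{\mathrm{T}}) Q^{\mathrm{T}} = E(UU^{\mathrm{T}})$ for every orthogonal $Q$, which forces $E(UU^{\mathrm{T}}) = c I_p$; taking the trace and using $\|U\|^2 = 1$ gives $c = 1/p$, after which rewriting $\mathrm{vec}(I_p)/p$ in terms of $v_p = \mathrm{vec}(I_p)/\sqrt{p}$ yields $v_p/\sqrt{p}$ as required.

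For the second identity, the cleanest route is via the Gaussian representation. Letting $Z \sim \mathcal{N}(0, I_p)$, we have $Z \stackrel{d}{=} \|Z\| \cdot U$ with $U$ independent of $\|Z\|$, so
\begin{align*}
    E(UU^{\mathrm{T}} \otimes UU^{\mathrm{T}}) = \frac{1}{E(\|Z\|^4)} \, E(ZZ^{\mathrm{T}} \otimes ZZ^{\mathrm{T}}).
\end{align*}
Since $\|Z\|^2 \sim \chi^2_p$, one immediately gets $E(\|Z\|^4) = p(p+2)$. Isserlis' theorem then delivers the entries of the Gaussian fourth-moment tensor as $E(Z_i Z_j Z_k Z_l) = \delta_{ij}\delta_{kl} + \delta_{ik}\delta_{jl} + \delta_{il}\delta_{jk}$.

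The last step is to identify the three $\delta$-products with the three matrices on the right-hand side of the claim. Fixing the pair-indexing convention in which the entry of $E(UU^{\mathrm{T}} \otimes UU^{\mathrm{T}})$ at row labelled $(a,b)$ and column labelled $(c,d)$ equals $E(U_a U_b U_c U_d)$, one verifies the entry formulas $(I_{p^2})_{(a,b),(c,d)} = \delta_{ac}\delta_{bd}$, $(K_{p,p})_{(a,b),(c,d)} = \delta_{ad}\delta_{bc}$ and $(v_p v_p^{\mathrm{T}})_{(a,b),(c,d)} = \delta_{ab}\delta_{cd}/p$, after which matching term-by-term against Isserlis' expansion gives $E(ZZ^{\mathrm{T}} \otimes ZZ^{\mathrm{T}}) = I_{p^2} + K_{p,p} + p v_p v_p^{\mathrm{T}}$, and division by $p(p+2)$ closes the proof.

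The main obstacle I anticipate is bookkeeping: getting the indexing scheme for the commutation matrix $K_{p,p}$ and for the Kronecker product $UU^{\mathrm{T}} \otimes UU^{\mathrm{T}}$ consistent so that the three Isserlis terms map cleanly onto $I_{p^2}$, $K_{p,p}$ and $p v_p v_p^{\mathrm{T}}$. An alternative would be to observe that $E(UU^{\mathrm{T}} \otimes UU^{\mathrm{T}})$ lies in the commutant of $\{ Q \otimes Q : Q \in \mathrm{O}(p)\}$, which is known to be spanned by these three matrices, and then to determine the three unknown coefficients by computing three independent trace-type contractions (e.g.\ $\mathrm{tr}(\cdot)$, $\mathrm{tr}(K_{p,p}\,\cdot)$, and $\mathrm{vec}(I_p)^{\mathrm{T}}(\cdot)\mathrm{vec}(I_p)$), each of which equals $1$ for $(UU^{\mathrm{T}}) \otimes (UU^{\mathrm{T}})$; this bypasses explicit index manipulation at the cost of invoking a spanning result from invariant theory.
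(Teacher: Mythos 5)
Your proof is correct, and it reaches the result by a genuinely different route from the paper. The paper's proof cites the known spherical moments $E(U_1^4)=3/\{p(p+2)\}$ and $E(U_1^2U_2^2)=1/\{p(p+2)\}$ from Fang et al.\ (Theorem 3.3), uses symmetry and exchangeability of the coordinates of $U$ to identify which entries of $E(UU^{\mathrm{T}}\otimes UU^{\mathrm{T}})$ are nonzero, and then arranges these values into the matrix $\tfrac{1}{p(p+2)}(I_{p^2}+K_{p,p}+pv_pv_p^{\mathrm{T}})$; the first identity is likewise obtained by citation. You instead derive everything from the Gaussian representation $Z \stackrel{d}{=} \|Z\|\,U$ with $\|Z\|$ independent of $U$, so that $E(UU^{\mathrm{T}}\otimes UU^{\mathrm{T}}) = E(ZZ^{\mathrm{T}}\otimes ZZ^{\mathrm{T}})/E(\|Z\|^4)$ with $E(\|Z\|^4)=p(p+2)$, and then read off the fourth-moment tensor from Isserlis' theorem. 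This is self-contained (no external moment tables needed), and Isserlis automatically handles the ``which entries vanish'' step that the paper argues via exchangeability; the price is the index bookkeeping you flag, namely matching $\delta_{ac}\delta_{bd}$, $\delta_{ad}\delta_{bc}$ and $\delta_{ab}\delta_{cd}$ to $I_{p^2}$, $K_{p,p}$ and $pv_pv_p^{\mathrm{T}}$ respectively, which checks out under the standard Kronecker indexing (the $((a,b),(c,d))$ entry of $UU^{\mathrm{T}}\otimes UU^{\mathrm{T}}$ being $U_aU_bU_cU_d$, symmetric in all four indices). Note that in the end both proofs share the final ``arrange the entries into the three canonical matrices'' step, so the real difference is only in how the scalar moment values are obtained; your invariant-theory alternative (the commutant of $\{Q\otimes Q\}$ being spanned by these three matrices, with coefficients fixed by three contractions) would also work and avoids indices entirely at the cost of a spanning lemma.
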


% \joni{CHECKED}
\begin{proof}[Proof of Lemma \ref{lem:moments_sphere}]
The first result follows directly from \cite[Theorem 2.7]{fang1990symmetric}. For the second one, the symmetry and exchangeability of the elements of $U$ implies that the only non-zero elements of $E(UU^{\mathrm{T}} \otimes UU^{\mathrm{T}})$ are those on the diagonal and the $(i, j)$th and $(j, i)$th elements of each $(i, j)$th off-diagonal block, $i \neq j$, of $E(UU^{\mathrm{T}} \otimes UU^{\mathrm{T}})$. The diagonal elements corresponding to quadruply repeated indices all equal $E(U_1^4) = 3/\{ p ( p + 2) \}$, while the remaining diagonal elements and the off-diagonal elements all equal $E(U_1^2 U_2^2) = 1/\{ p ( p + 2) \}$ \citep[Theorem 3.3]{fang1990symmetric}. Arranging the elements to a matrix now gives the claim.

% The remaining results are proven with similar arguments, using the following identities, following from \citep[Theorem 3.3]{fang1990symmetric}:
% \begin{align*}
%     \mathrm{E}(U_1^6) &= \frac{15}{p (p + 2) (p + 4)}, \\
%     \mathrm{E}(U_1^4 U_2) &= \frac{3}{p (p + 2) (p + 4)}, \\
%     \mathrm{E}(U_1^2 U_2^2 U_3^3) &= \frac{1}{p (p + 2) (p + 4)}.
% \end{align*}
\end{proof}

% \joni{CHECKED}
\begin{proof}[Proof of Theorem \ref{theo:elliptical_case}]
    First, we may without loss of generality take $\mu = 0$, since the Euclidean metric depends on the observations only through their pairwise differences.
    
    We next derive an expression for the quantity $O(P_{\mu, \Sigma})$. For this, Theorem 3.3 in \cite{fang1990symmetric} gives that
    \begin{align}\label{eq:uniform_moments_1}
        E(u_1^2 u_2^2) = \frac{1}{p(p + 2)} \quad \mbox{and} \quad E(u_1^4) = \frac{3}{p(p + 2)}.
    \end{align}
    These two expressions can be used to show, with some algebra, that
    \begin{align}\label{eq:uniform_moments_2}
        E \{ (u^{\mathrm{T}} \Sigma u)^2 \} = \frac{1}{p(p + 2)} \{ \mathrm{tr} (\Sigma) \}^2 + \frac{2}{p(p + 2)} \mathrm{tr}(\Sigma^2).
    \end{align}
    Next, using \eqref{eq:uniform_moments_1} and \eqref{eq:uniform_moments_2}, it is straightforwardly seen that,
    \begin{align}\label{eq:elliptical_g}
        O(P_{\mu, \Sigma}) = \frac{E \|X_1 - X_2\|^2 \|X_1 - X_3\|^2 }{E( \|X_1 - X_2\|^4 )} = \frac{(\gamma \beta_R + 3) + 2 \gamma \beta_R G}{(2 \gamma \beta_R + 2) + (4 \gamma \beta_R + 4) G},
    \end{align}
    where $\gamma = p/(p + 2)$, $G = \mathrm{tr} (\Sigma^2)/\{ \mathrm{tr} (\Sigma) \}^2$ and $\beta_R = E(R^4)/\{ E(R^2) \}^2$. By the power mean inequality, we have that $G \geq 1/p$, with equality if and only if all eigenvalues of $\Sigma$ are equal, i.e., $\Sigma = \lambda I_p$ for some $\lambda > 0$ (the scenario $\lambda = 0$ is not allowed as we ruled out the case $\Sigma = 0$).

    Differentiation reveals that the mapping $x \mapsto (A + Bx)/(C + Dx)$, where $A, B, C, D > 0$, is strictly increasing in $[0, \infty)$ if $AD > BC$. Now, since $\beta_R \geq 1$, this condition applies to the expression on the right-hand side of \eqref{eq:elliptical_g} and the earlier inequality $G \geq 1/p$ guarantees that,
    \begin{align*}
        O(P_X) \geq \frac{(\gamma \beta_R + 3) + 2 \gamma \beta_R/p}{(2 \gamma \beta_R + 2) + (4 \gamma \beta_R + 4)/p},
    \end{align*}
    with equality if and only if $\Sigma = \lambda I_p$ for some $\lambda > 0$. Simplifying now yields part (ii) of the claim. For part (i), it is sufficient to use the same argument with the inequality $G \leq 1$, where equality is reached if and only if all but the first eigenvalue of $\Sigma$ are zero. 
\end{proof}

% \joni{Checked with R that this holds for the normal distribution}

\begin{proof}[Proof of Theorem \ref{theo:elliptical_case_h0}]
The sample estimator $O(P_{\mu, \lambda I_p, n})$ can be written as
\begin{align*}
    O(P_{\mu, \lambda I_p, n}) = \frac{\frac{1}{n^3} \sum_{i = 1}^n \sum_{j = 1}^n \sum_{k = 1}^n d(X_i - \bar{X}, X_j - \bar{X})^2 d(X_i - \bar{X}, X_k - \bar{X})^2}{\frac{1}{n^2} \sum_{i = 1}^n \sum_{j = 1}^n \sum_{k = 1}^n d(X_i - \bar{X}, X_j - \bar{X})^4}.
\end{align*}
Simplifying this further shows that,
\begin{align}\label{eq:elliptical_sample_expansion}
    O(P_{\mu, \lambda I_p, n}) = \frac{\tilde{m}_n + 3 \{ \mathrm{tr}(\tilde{S}_n) \}^2}{2 \tilde{m}_n + 4 \mathrm{tr}(\tilde{S}_n^2) + 2 \{ \mathrm{tr}(\tilde{S}_n) \}^2} = \frac{\tilde{m}_n + 3 p v_p^{\mathrm{T}} \tilde{s}_n \tilde{s}_n^{\mathrm{T}} v_p}{2 \tilde{m}_n + 4 \tilde{s}_n^{\mathrm{T}} \tilde{s}_n + 2 p v_p^{\mathrm{T}} \tilde{s}_n \tilde{s}_n^{\mathrm{T}} v_p},
\end{align}
where $\tilde{m}_n = (1/n) \sum_{i = 1}^n \| X_i - \bar{X} \|^4$, $\tilde{S}_n = (1/n) \sum_{i = 1}^n (X_i - \bar{X}) (X_i - \bar{X})^{\mathrm{T}}$, $\tilde{s}_n = \mathrm{vec}(\tilde{S}_n)$ and $v_p = \mathrm{vec}(I_p)/\sqrt{p}$.

Take next $\mu = 0$, $\lambda = 1$, without loss of generality, and denote $q_r = E(R^r)$. Then $E(UU^{\mathrm{T}}) = (1/p) I_p$, meaning that $\tilde{S}_n \rightarrow_p (q_2/p) I_p$. Thus, by the central limit theorem and Slutsky's theorem, $\sqrt{n} \{ \tilde{S}_n -  (q_2/p) I_p \} = \sqrt{n} \{ S_n -  (q_2/p) I_p \} + o_p(1) $ where $S_n = (1/n) \sum_{i = 1}^n X_i X_i^{\mathrm{T}}$. Similarly, $\tilde{m}_n \rightarrow_p q_4 $, and $\sqrt{n} ( \tilde{m}_n -  q_4 ) = \sqrt{n} ( m_n -  q_4 ) + o_p(1) $ where $m_n = (1/n) \sum_{i = 1}^n \| X_i \|^4$.

For us to use the delta method, we require the joint limiting distribution of $(\mathrm{vec}(S_n), m_n)$. The previous expansions, along with the central limit theorem, give this to be
\begin{align*}
    \sqrt{n} \begin{pmatrix}
        \mathrm{vec}(S_n) - (q_2/\sqrt{p}) v_p \\
        m_n - q_4
    \end{pmatrix} \rightsquigarrow \mathcal{N}_{p^2 + 1} \left( 0, \begin{pmatrix}
        \mathrm{cov}(X \otimes X) & \mathrm{Cov}( \| X \|^4, X \otimes X) \\
        \mathrm{cov}(X \otimes X, \| X \|^4 ) & \mathrm{var}(\| X \|^4)
    \end{pmatrix} \right).
\end{align*}
By Lemma \ref{lem:moments_sphere} and the unit length of $U$, the covariance matrix above takes the form,
\begin{align*}
    \Theta = \begin{pmatrix}
        \frac{q_4}{p(p + 2)}(I_{p^2} + K_{p,p} + p v_p v_p^{\mathrm{T}}) - \frac{q_2^2}{p} v_p v_p^{\mathrm{T}}  & (q_6 - q_4 q_2) \frac{1}{\sqrt{p}} v_p \\
        (q_6 - q_4 q_2) \frac{1}{\sqrt{p}} v_p^{\mathrm{T}} & q_8 - q_4^2
    \end{pmatrix},
\end{align*}
where $K_{p,p}$ is the $(p, p)$-commutation matrix. Define next $h: \mathbb{R}^{p^2 + 1} \to \mathbb{R}$ to be the map
\begin{align*}
    h(s, m) = \frac{m + 3 p v_p^{\mathrm{T}} s s^{\mathrm{T}} v_p}{2 m + 4 s^{\mathrm{T}} s + 2 p v_p^{\mathrm{T}} s s^{\mathrm{T}} v_p},
\end{align*}
whose gradient is seen to evaluate to
% \begin{align*}
%     \nabla h(s, m) = \frac{1}{(2 m + 4 s' s + 2 v_p' s s' v_p)^2}
%     \begin{pmatrix}
%     (8 m + 24 s's) v_p v_p's - (2m + 6 v_p'ss'v_p) s\\
%     4 s'(I_{p^2} - v_p v_p') s.
%     \end{pmatrix}
% \end{align*}
% Plugging in the means, we get,
\begin{align*}
    \nabla h((q_2/\sqrt{p}) v_p, q_4) = \frac{p (p - 1) q_2}{\{ p (q_4 + q_2^2) + 2 q_2^2 \}^2}
    \begin{pmatrix}
    2 \sqrt{p} q_4 v_p\\
    - q_2
    \end{pmatrix}.
\end{align*}
After some simplification, the resulting asymptotic variance is seen to be,
\begin{align*}
    \sigma_R^2 =& \nabla h((q_2/\sqrt{p}) v_p, q_4)^{\mathrm{T}} \Theta \nabla h((q_2/\sqrt{p}) v_p, q_4) \\
    =& \frac{p^2 (p - 1)^2 q_2^2}{\{ p (q_4 + q_2^2) + 2 q_2^2 \}^4} \{ 4 q_4^2 (q_4 - q_2^2) - 4 q_4 q_2 (q_6 - q_4 q_2) + q_2^2 (q_8 - q_4^2) \},
\end{align*}
concluding the proof.
\end{proof}

\begin{lemma}\label{lem:von_mises_triangle_ineq}
Let $\alpha, \beta, \gamma \in \mathbb{R}$ be arbitrary. Then,
    \begin{align*}
        \sqrt{1 - \cos(\alpha - \gamma)} \leq \sqrt{1 - \cos(\alpha - \beta)} + \sqrt{1 - \cos(\beta - \gamma)}.
    \end{align*}
\end{lemma}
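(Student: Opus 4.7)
The plan is to reduce the stated inequality to a well-known subadditivity property of $|\sin|$ via the half-angle identity, and then finish with the triangle inequality for absolute values.

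First, I would invoke the identity $1 - \cos\theta = 2\sin^2(\theta/2)$, valid for all real $\theta$, to rewrite
\begin{align*}
\sqrt{1 - \cos\theta} = \sqrt{2}\,\bigl|\sin(\theta/2)\bigr|.
\end{align*}
Applied to each of the three square roots in the claim, the proposed inequality is equivalent to
\begin{align*}
\bigl|\sin\bigl((\alpha - \gamma)/2\bigr)\bigr| \leq \bigl|\sin\bigl((\alpha - \beta)/2\bigr)\bigr| + \bigl|\sin\bigl((\beta - \gamma)/2\bigr)\bigr|.
\end{align*}
Setting $x = (\alpha - \beta)/2$ and $y = (\beta - \gamma)/2$, we have $(\alpha - \gamma)/2 = x + y$, so it suffices to prove the sine subadditivity inequality $|\sin(x + y)| \leq |\sin x| + |\sin y|$ for all real $x, y$.

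For this last step, I would expand with the sine addition formula $\sin(x+y) = \sin x \cos y + \cos x \sin y$ and apply the triangle inequality together with the bound $|\cos\cdot| \leq 1$:
\begin{align*}
|\sin(x+y)| \leq |\sin x|\,|\cos y| + |\cos x|\,|\sin y| \leq |\sin x| + |\sin y|.
\end{align*}
This chain of inequalities closes the argument, and there is no real obstacle: the only ``clever'' move is recognizing that the half-angle substitution converts the claim into the standard subadditivity of $|\sin|$, after which the proof is a one-line application of the angle-sum formula. Consequently, the lemma holds, which in turn validates that $d(x, y) = \sqrt{1 - \cos(x - y)}$ satisfies the triangle inequality on $\mathbb{S}^1$ as used in Section~\ref{sec:cases}.
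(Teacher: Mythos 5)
Your proof is correct, but it takes a different route from the paper. The paper's argument identifies $\sqrt{1-\cos(x-y)}$ as $\tfrac{1}{\sqrt{2}}$ times the Euclidean (chordal) distance between the points $(\cos x,\sin x)$ and $(\cos y,\sin y)$ in $\mathbb{R}^2$, via $\|(\cos\alpha,\sin\alpha)-(\cos\beta,\sin\beta)\|^2 = 2 - 2\cos(\alpha-\beta)$, and then simply inherits the triangle inequality from $\mathbb{R}^2$. You instead use the half-angle identity to rewrite each term as $\sqrt{2}\,|\sin(\cdot/2)|$ and reduce the claim to the subadditivity $|\sin(x+y)|\leq|\sin x|+|\sin y|$, which you verify with the angle-sum formula. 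Both proofs are complete and equally short. The paper's embedding argument is slightly more conceptual: it reveals that $(\mathbb{S}^1,d)$ is, up to a constant factor, isometrically embedded in Euclidean $\mathbb{R}^2$, a fact with consequences beyond the triangle inequality (e.g., it makes $d$ a metric of negative type). Your argument is more self-contained and purely trigonometric, requiring no geometric interpretation; it also isolates the reusable inequality $|\sin(x+y)|\leq|\sin x|+|\sin y|$ explicitly. Either proof is acceptable here.
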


\begin{proof}[Proof of Lemma \ref{lem:von_mises_triangle_ineq}]
The result follows by applying the triangle inequality for the Euclidean metric in $\mathbb{R}^2$ to the points $(\cos \alpha, \sin \alpha)$, $(\cos \beta, \sin \beta)$, $(\cos \gamma, \sin \gamma)$, and using the identity $\cos(\alpha - \beta) = \cos \alpha \cos \beta + \sin \alpha \sin \beta$.
\end{proof}

\begin{proof}[Proof of Theorem \ref{theo:von_mises_case}]
    To simplify \eqref{eq:main_concept} we use the standard multiple angle and power reduction formulas for trigonometric functions, along with the identities \citep[Section 3.5.4]{mardia2000directional},
    \begin{align*}
        E \{ \cos(mX) \} = \frac{I_m(\kappa)}{I_0(\kappa)} \quad \mbox{and} \quad E \{ \sin(mX) \} = 0,
    \end{align*}
    valid for all $m \in \mathbb{N}$, where $X \sim P_{\kappa}$ and $I_m(\kappa)$ denotes the modified Bessel function of the first kind and order $m$. These yield,
    \begin{align}\label{eq:g_p_kappa}
        O(P_{\kappa}) = \frac{2 I_0^3 - 3 I_0 I_1^2 + I_1^2 I_2 }{3 I_0^3 - 4 I_0 I_1^2 + I_0 I_2^2},
    \end{align}
    where $I_m \equiv I_m(\kappa)$.
    
    Using next the notation $x = I_1/I_0$ and $y = I_2/I_0$, we obtain
    \begin{align}\label{eq:factor_1}
        O(P_{\kappa}) - \frac{1}{2} = \left(\frac{1 - y}{2}\right) \left( \frac{1 - 2x^2 + y}{3 - 4 x^2 + y^2} \right).
    \end{align}
    By \cite[Lemma 3]{robert1990modified} and \citep[Appendix A]{watson1983statistics}, $I_1/I_0$ and $I_2/I_1$ are increasing functions of $\kappa$, taking values in $[0, 1]$. Consequently $1 - y = 1 - (I_2/I_1)(I_1/I_0)$ is  decreasing in $\kappa$ and takes values in $[0, 1]$. 

    The desired monotonicity of $O(P_\kappa)$ now follows once we show that
    \begin{align}\label{eq:final_map}
        \kappa \mapsto h(\kappa) := \frac{I_0^2 - 2I_1^2 + I_0 I_2}{3 I_0^2 - 4 I_1^2 + I_2^2}
    \end{align}
    is decreasing. From \cite[Lemma 2.1]{yang2018monotonicity} we have the everywhere convergent power series representations
    % \begin{align*}
    %     I_0^2(\kappa) =& \sum_{n = 0}^\infty \frac{(2n)!}{n! n! n! n!} \left( \frac{\kappa}{2} \right)^{2n}, \\
    %     I_1^2(\kappa) =& \sum_{n = 0}^\infty \frac{(2n + 2)!}{n! (n + 1)! (n + 1)! (n + 2)!} \left( \frac{\kappa}{2} \right)^{2n + 2}, \\
    %     I_0(\kappa) I_2(\kappa) =& \sum_{n = 0}^\infty \frac{(2n + 2)!}{n! n! (n + 2)! (n + 2)!} \left( \frac{\kappa}{2} \right)^{2n + 2},  \\
    %     I_2^2(\kappa) =& \sum_{n = 0}^\infty \frac{(2n + 4)!}{n! (n + 2)! (n + 2)! (n + 4)!} \left( \frac{\kappa}{2} \right)^{2n + 4}.
    % \end{align*}
    \begin{align*}
        I_0^2(\kappa) =& \sum_{n = 0}^\infty \frac{(2n)!}{n! n! n! n!} \left( \frac{\kappa^2}{2} \right)^{n}, \\
        I_1^2(\kappa) =& \sum_{n = 1}^\infty \frac{(2n)!}{(n - 1)! n! n! (n + 1)!} \left( \frac{\kappa^2}{2} \right)^{n}, \\
        I_0(\kappa) I_2(\kappa) =& \sum_{n = 1}^\infty \frac{(2n)!}{(n - 1)! (n - 1)! (n + 1)! (n + 1)!} \left( \frac{\kappa^2}{2} \right)^{n},  \\
        I_2^2(\kappa) =& \sum_{n = 2}^\infty \frac{(2n)!}{(n - 2)! n! n! (n + 2)!} \left( \frac{\kappa^2}{2} \right)^{n}.
    \end{align*}
    Consequently, the numerator of \eqref{eq:final_map} has the power series $\sum_{n = 0}^\infty a_n (2n)! (\kappa^2/2)^{n}$, where
    \begin{align*}
        a_0 = 1, \quad a_1 = \frac{1}{4},
    \end{align*}
    and
    \begin{align*}
        a_n = \frac{1}{(n!)^4} \left( 1 - \frac{2n}{n + 1} + \frac{n^2}{(n + 1)^2} \right) = \frac{1}{(n!)^4 (n + 1)^2},
    \end{align*}
    for $n \geq 2$. For the denominator of \eqref{eq:final_map}, we analogously have the power series $\sum_{n = 0}^\infty b_n (2n)! (\kappa^2/2)^{n}$, where
    \begin{align*}
        b_0 = 3, \quad b_1 = 1,
    \end{align*}
    and
    \begin{align*}
        b_n = \frac{1}{(n!)^4} \left( 3 - \frac{4n}{n + 1} + \frac{(n - 1)n}{(n + 1)(n + 2)} \right) = \frac{6}{(n!)^4 (n + 1) (n + 2)},
    \end{align*}
    for $n \geq 2$. Consequently, the ratios $a_n/b_n$ equal
    \begin{align*}
        \frac{a_0}{b_0} = \frac{1}{3}, \quad \frac{a_1}{b_1} = \frac{1}{4}, \quad \frac{a_n}{b_n} = \frac{n + 2}{6(n + 1)}.
    \end{align*}
    We then see that the sequence $a_n/b_n$ is decreasing, implying by \cite[Lemma 2.2]{yang2018monotonicity} that so is the map $\kappa \mapsto h(\sqrt{\kappa})$. Consequently, also the original function in \eqref{eq:final_map} is decreasing, proving the decreasingness of $O(P_\kappa)$.

    What remains now is to establish the values of $O(P_0)$ and $O(P_\infty)$. We have $I_0(0) = 1$ and $I_m(0) = 0$ for $m \geq 1$. Consequently, plugging in to \eqref{eq:g_p_kappa} shows that $O(P_0) = 2/3$. For the case $\kappa \rightarrow \infty$, we use the asymptotic expansions in \citep[Appendix 1]{mardia2000directional} to obtain,
    \begin{align*}
        \frac{I_m(\kappa)}{I_0(\kappa)} = 1 + \frac{m^2}{2 \kappa} + \frac{m^2(m^2 - 2)}{8 \kappa^2} + \mathcal{O}\left( \frac{1}{\kappa^3} \right).
    \end{align*}
    Plugging these in into \eqref{eq:g_p_kappa}, we get
    \begin{align*}
        O(P_{\kappa}) = \frac{\frac{3}{\kappa^2} + \mathcal{O}\left( \frac{1}{\kappa^3} \right)}{\frac{6}{\kappa^2} + \mathcal{O}\left( \frac{1}{\kappa^3} \right)} \rightarrow \frac{1}{2},
    \end{align*}
    as $\kappa \rightarrow \infty$, completing the proof.
\end{proof}

% \joni{CHECKED}
\begin{proof}[Proof of Theorem \ref{theo:von_mises_case_h0}]
    Using again standard multiple angle and power reduction formulas for trigonometric functions, the quantity $O(P_{0, n})$ in \eqref{eq:main_concept_sample} takes the form
    \begin{align*}
        O(P_{0, n}) = \frac{1 - 2a^2 - c^2 + 2ace + a^2 b - bc^2}{2(1 - a^2 - c^2 + b^2 - b + e^2)} =: h(a, b, c, e),
    \end{align*}
    where $(a, b, c, e) = \frac{1}{n} \sum_{i = 1}^n (\cos (X_i), \cos^2 (X_i), \sin (X_i), \cos (X_i) \sin (X_i))^{\mathrm{T}}$. By the central limit theorem, the vector $(a, b, c, e)$ has the limiting distribution,
    \begin{align*}
        \sqrt{n} \{ (a, b, c, e)^{\mathrm{T}} - (0, 1/2, 0, 0)^{\mathrm{T}} \} \rightsquigarrow \mathcal{N}_4(0, \mathrm{diag}(1/2, 1/8, 1/2, 1/8)),
    \end{align*}
    as $n \rightarrow \infty$. It is straightforward (if tedious), to check that the gradient of $(a, b, c, e) \mapsto h(a, b, c, e)$ is zero at $(0, 1/2, 0, 0)$, meaning that we need to resort to second-order delta method, which says that the limiting distribution of $n \{ O(P_{0, n}) - 2/3 \}$ is the same as the distribution of $(1/2) Z^{\mathrm{T}} \Sigma^{1/2} H \Sigma^{1/2} Z$, where $H \in \mathbb{R}^{4 \times 4}$ is the Hessian of the map $(a, b, c, e) \mapsto h(a, b, c, e)$ at $(0, 1/2, 0, 0)$, $\Sigma = \mathrm{diag}(1/2, 1/8, 1/2, 1/8)$ and $Z \sim \mathcal{N}_4(0, I_4)$. After some differentiation, we obtain that $H = \mathrm{diag}(-2/9, -16/9, -2/9, -16/9)$, which finishes the proof.
\end{proof}

% \joni{CHECKED}
\begin{proof}[Proof of Theorem \ref{theo:compositional_case}]
    The Aitchison metric $d^2(X_1, X_2)$ for $X_1, X_2 \sim P_{\mu, \theta}$ is invariant to the choice of $\mu$, meaning that we can take $\mu_j \equiv 1$, without loss of generality. We begin by deriving a simplified expression for the denominator $E \{ d(X_1, X_2)^4 \}$, where $X_1$ and $X_2$ are i.i.d. from the distribution $P_\theta$ with the respective ``generating variates'' $Z_1, \ldots , Z_p$ and $W_1, \ldots , W_p$.  
    For this, we denote $A_i = \log Z_i$, $B_i = \log W_i$, $C_i = \theta_i A_i$ and $D_i = \theta_i B_i$. Moreover, we use the notation that $S_q = \sum_{i = 1}^p C_i^q$, $T_q = \sum_{i = 1}^p D_i^q$ and $M = \sum_{i = 1}^p C_i D_i$. Consequently,
    \begin{align*}
        4 p^2 E \{ d(X_1, X_2)^4 \} =&  E \left( \left[ \sum_{i, j} \{ (C_i - C_j)^2 - 2 (C_i - C_j) (D_i - D_j) + (D_i - D_j)^2 \} \right]^2 \right) \\
        =& E [ \{ 2(p S_2 - S_1^2) - 4 (p M - S_1 T_1) + 2(p T_2 - T_1^2) \}^2 ].
    \end{align*}
    We denote $u_q = E(A_1^q)$, $\psi_q = \sum_{i = 1}^p \theta_i$ and observe that, by the symmetry of $\log Z$, we have $u_q = 0$ for all off $q$. We thus obtain the following moments,
    \begin{align*}
        E(S_2^2) = E(S_2 S_1^2) =& \psi_4 (u_4 - u_2^2) + \psi_2^2 u_2^2 \\
        E(S_1^4) =& \psi_4 (u_4 - 3 u_2^2) + 3 \psi_2^2 u_2^2 \\
        E \{ (p S_2 - S_1^2)^2 \} =& (p - 1)^2 \{ \psi_4 (u_4 - u_2^2) + \psi_2^2 u_2^2 \} - 2 (\psi_4 - \psi_2^2) u_2^2 \\
        E(M^2) = E(M S_1 T_1) =& \psi_4 u_2^2 \\
        E(S_1^2) = E(S_2) =& \psi_2 u_2 \\
        E\{ (p M - S_1 T_1)^2 \} =& (p - 1)^2 \psi_4 u_2^2 - (\psi_4 - \psi_2^2) u_2^2 \\
        E\{ (p S_2 - S_1^2) (p T_2 - T_1^2) \} =& (p - 1)^2 \psi_2^2 u_2^2 \\
        E\{ (p S_2 - S_1^2) (p M - S_1 T_1) \} =& 0.
    \end{align*}
% \mathrm{E} \{ (p S_2 - S_1^2)^2 \} =& p(p - 2) \{ \psi_4 (u_4 - u_2^2) + \psi_2^2 u_2^2 \} + \psi_4 (u_4 - 3 u_2^2) + 3 \psi_2^2 u_2^2 \\
    Using these, we get that
    \begin{align*}
        4 p^2 E \{ d(X_1, X_2)^4 \} =& 8 (p - 1)^2 (\psi_4 u_4 + \psi_4 u_2^2 + 2 \psi_2^2 u_2^2 ) - 32 (\psi_4 - \psi_2^2) u_2^2 \\
        =& u_2^2 \psi_2^2 \{ 8 (p - 1)^2 (\beta \gamma_Z + \beta + 2) - 32 (\beta - 1) \},
    \end{align*}
    where $\beta = \psi_4/\psi_2^2$ and $\gamma_Z = u_4/u_2^2$. Using a similar technique, we also obtain,
    \begin{align*}
        & 4 p^2 E \{ d(X_1, X_2)^2 d(X_1, X_3)^2 \}\\
        =& 4 (p - 1)^2 (\psi_4 u_4 - \psi_4 u_2^2 + 4 \psi_2^2 u_2^2) - 8 (\psi_4 - \psi_2^2) u_2^2 \\
        =&  u_2^2 \psi_2^2 \{ 4 (p - 1)^2 (\beta \gamma_Z - \beta + 4) - 8 (\beta - 1) \}.
    \end{align*}
    Putting everything together, we get
    \begin{align*}
        O(P_{\mu , \theta}) =& \frac{ 4 (p - 1)^2 (\beta \gamma_Z - \beta + 4) - 8 (\beta - 1) }{  8 (p - 1)^2 (\beta \gamma_Z + \beta + 2) - 32 (\beta - 1) }.
    \end{align*}
    The remainder of the proof is similar to the conclusion of the proof of Theorem \ref{theo:elliptical_case}. That is, we first check that the map $\beta \mapsto O(P_{\mu, \theta})$ is strictly decreasing whenever $p > 1$, after which the desired result follows by using the bounds $1/p \leq \beta \leq 1$ and their equality conditions.

\end{proof}

% \joni{CHECKED UP TO THE CENTRAL LIMIT THEOREM AND THE FINAL RESULT IS VERIFIED WITH R}

\begin{proof}[Proof of Theorem \ref{theo:compositional_case_h0}]
    We may, without loss of generality, take $\theta_0 = 1$, $\mu_j \equiv 1$. We begin by deriving an asymptotic expansion for
    \begin{align*}
        U_n =& \frac{1}{n^3} \sum_{i = 1}^n \sum_{j = 1}^n \sum_{k = 1}^n d(X_i, X_j)^2 d(X_i, X_k)^2 \\
        =& \frac{1}{n^3} \sum_{i,j,k} \sum_{s,t,u,v} (a_{is} - a_{it} - a_{js} + a_{jt})^2(a_{iu} - a_{iv} - a_{ku} + a_{kv})^2,
    \end{align*}
    where $a_{is}$, $i = 1, \ldots , n$, $s = 1, \ldots , p$, form a random sample from the distribution of $\log Z_1$. By standard asymptotic arguments and using the symmetry of $\log Z_1$, we get,
    \begin{align*}
        U_n =&   \frac{1}{n} \sum_i \sum_{s,t,u,v} (a_{is} - a_{it})^2 (a_{iu} - a_{iv})^2 + 3 \left\{ \frac{1}{n} \sum_i \sum_{s,t} (a_{is} - a_{it})^2 \right\}^2 + o_p\left( \frac{1}{\sqrt{n}} \right) \\
        =:& B_{1n} + 3 B_{2n}^2 + o_p\left( \frac{1}{\sqrt{n}} \right).
    \end{align*}
    Similarly, we obtain,
    \begin{align*}
        V_n =& \frac{1}{n^3} \sum_{i = 1}^n \sum_{j = 1}^n \sum_{k = 1}^n d(X_i, X_j)^4 \\
        =& 2 B_{1n} + 2 B_{2n}^2 + 4 \sum_{s,t,u,v} \left\{ \frac{1}{n} \sum_i (a_{is} - a_{it}) (a_{iu} - a_{iv}) \right\}^2 + o_p\left( \frac{1}{\sqrt{n}} \right).
    \end{align*}
    Expanding reveals that the third term above can be written as
    \begin{align*}
        16 p^2 \mathrm{tr}(S_n^2) + 16 (1_p^{\mathrm{T}} S_n 1_p)^2 - 32 p 1_p^{\mathrm{T}} S_n^2 1_p,
    \end{align*}
    where $S_n = (1/n) \sum_{i = 1}^n a_i a_i^{\mathrm{T}}$ and $a_i = (a_{i1}, \ldots , a_{ip})^{\mathrm{T}}$. Moreover, the quantities $B_{1n}$, $B_{2n}$ can be written as,
    \begin{align*}
        B_{1n} =& \frac{4 p^2}{n} \sum_{i = 1}^n a_i Q a_i a_i^{\mathrm{T}} Q a_i =: 4 p^2 m_n \\
        B_{2n} =& 2 p \mathrm{tr}(S_n) - 2 \cdot 1_p^{\mathrm{T}} S_n 1_p,
    \end{align*}
    where $Q = I_p - (1/p) 1_p 1_p^{\mathrm{T}} $. Hence, $O(P_{\mu, \theta_0 1_p, n})$ is equal to 
    \begin{align*}
        \frac{p^2 m_n + 3 p^2 \{ \mathrm{tr}(S_n) \}^2 - 6 p \mathrm{tr}(S_n) 1_p^{\mathrm{T}} S_n 1_p + 3 (1_p^{\mathrm{T}} S_n 1_p)^2}{2 p^2 m_n + 2 p^2 \{ \mathrm{tr}(S_n) \}^2 - 4 p \mathrm{tr}(S_n) 1_p^{\mathrm{T}} S_n 1_p + 4 p^2 \mathrm{tr}(S_n^2) + 6 (1_p^{\mathrm{T}} S_n 1_p)^2 - 8 p 1_p^{\mathrm{T}} S_n^2 1_p},
    \end{align*}
    up to an $o_p(1/\sqrt{n})$-quantity. By the independence of the elements $a_{ij}$ and using \cite[Section 6.2.2]{petersen2008matrix} we may compute
    \begin{align*}
        E(m_n) =& (p - 1)(p + 1) u_2^2 + (1/p) (p - 1)^2 (u_4 - 3 u_2^2) \\
        E(S_n) =& u_2 I_p,
    \end{align*}
    where $u_q = E(a_{11}^q)$. Under the assumption of log-normality for $Z$, we have that $a_{ij}$ form a random sample from $\mathcal{N}(0, 1)$ and $u_2 = 1$, $u_4 = 3$. Moreover, denoting $v_p = (1/\sqrt{p}) \mathrm{vec}(I_p)$, by the central limit theorem,
    \begin{align*}
        & \sqrt{n} \begin{pmatrix}
            \mathrm{vec}(S_n) - \sqrt{p} v_p \\
            m_n - (p - 1)(p + 1)          
        \end{pmatrix} \\
        \rightsquigarrow& 
        \mathcal{N}_{p^2 + 1} \left( 0, \begin{pmatrix}
        \mathrm{cov}(A \otimes A) & \mathrm{cov}\{  (A^{\mathrm{T}} Q A)^2, A \otimes A \} \\
        \mathrm{cov}\{ A \otimes A, (A^{\mathrm{T}} Q A)^2 \} & \mathrm{var}\{ (A^{\mathrm{T}} Q A)^2 \}
    \end{pmatrix} \right),
    \end{align*}
    where $A \sim \mathcal{N}_p(0, I_p)$. From the proof of Theorem \ref{theo:elliptical_case_h0} we obtain that $\mathrm{cov}(A \otimes A) = I_{p^2} + K_{p,p}$ where $K_{p,p}$ is the $(p, p)$ commutation matrix. For the remaining variance terms, we denote $y = QA, z = PA$ where $P = I_p - Q = (1/p) 1_p 1_p^{\mathrm{T}}$, making $y, z$ jointly normally distributed and independent. Consequently, $y^{\mathrm{T}}y \sim \chi^2_{p - 1}$ and
    \begin{align*}
    E ( A A^{\mathrm{T}} Q A A^{\mathrm{T}} Q A A^{\mathrm{T}} ) =& E(y y^{\mathrm{T}} y y^{\mathrm{T}} y y^{\mathrm{T}}) + (p - 1)(p + 1) P\\
    =& U E(w w^{\mathrm{T}} w w^{\mathrm{T}} w w^{\mathrm{T}}) U^{\mathrm{T}} + (p - 1)(p + 1) P,
    \end{align*}
    where $U \in \mathbb{R}^{p \times (p - 1)}$ has orthonormal columns and satisfies $UU^{\mathrm{T}} = Q$, and $w \sim \mathcal{N}_{p - 1}(0, I_{p - 1})$. A brute force computation shows that $E(w w^{\mathrm{T}} w w^{\mathrm{T}} w w^{\mathrm{T}})$ is diagonal matrix whose diagonal elements each equal $(p + 1)(p + 3)$. Consequently,
    \begin{align*}
         & \mathrm{cov}\{  (A^{\mathrm{T}} Q A)^2, A \otimes A \} \\
         =& (p + 1)(p + 3) \mathrm{vec}(Q) + \frac{1}{p} (p - 1)(p + 1) 1_{p^2} - (p - 1)(p + 1) \mathrm{vec}(I_p) \\
         =& \frac{4 (p + 1) }{p} (p \sqrt{p} v_p - 1_{p^2})
    \end{align*}
%     =&
    Using the same orthogonal decomposition we also obtain that
    \begin{align*}
        E ( A^{\mathrm{T}} Q A A^{\mathrm{T}} Q A A^{\mathrm{T}} Q A A^{\mathrm{T}} Q A ) = (p - 1)(p + 1)(p + 3)(p + 5),
    \end{align*}
    giving
    \begin{align*}
        \mathrm{var}\{ (A^{\mathrm{T}} Q A)^2 \} =& (p - 1)(p + 1)(p + 3)(p + 5) - (p - 1)^2(p + 1)^2 \\
        =& 8 (p - 1)(p + 1)(p + 2)
    \end{align*}
    We denote by $h: \mathbb{R}^{p^2 + 1} \to \mathbb{R}$ the map taking $(s, m)$ to
    \begin{align*}
        \frac{p^2 m + 3 p^3 v_p^{\mathrm{T}} s s^{\mathrm{T}} v_p - 6 p \sqrt{p} v_p^{\mathrm{T}} s s^{\mathrm{T}} 1_{p^2}  + 3 1_{p^2}^{\mathrm{T}} s s^{\mathrm{T}} 1_{p^2}}{2 p^2 m + 2 p^3 v_p^{\mathrm{T}} s s^{\mathrm{T}} v_p - 4 p \sqrt{p} v_p^{\mathrm{T}} s s^{\mathrm{T}} 1_{p^2} + 4 p^2 s^{\mathrm{T}}s + 6 1_{p^2}^{\mathrm{T}} s s^{\mathrm{T}} 1_{p^2} - 8 p s^{\mathrm{T}} (1_p 1_p^{\mathrm{T}} \otimes I_p) s}.
    \end{align*}
    Letting $t(s, m)$ and $b(s, m)$ denote the numerator and denominator, respectively, of this fraction, we have
    \begin{align*}
        t(\sqrt{p} v_p, (p - 1)(p + 1)) =& 2 p^2 (2p - 1)(p - 1) \\
        b(\sqrt{p} v_p, (p - 1)(p + 1)) =& 4 p^2 (p + 1)(p - 1)
    \end{align*}
    we get the gradient with respect to $s$
    \begin{align*}
        \nabla_s h(s, m) = \frac{1}{b(s, m)^2} & \left[
             b(s, m) \{ 6p^3 v_p v_p^{\mathrm{T}} s - 6 p \sqrt{p} (v_p 1^{\mathrm{T}}_{p^2} + 1_{p^2} v_p^{\mathrm{T}} )s + 6 \cdot 1_{p^2} 1^{\mathrm{T}}_{p^2} s\} \right. \\
             & 
             - t(s, m) \{ 4p^3 v_p v_p^{\mathrm{T}} s - 4 p \sqrt{p} (v_p 1^{\mathrm{T}}_{p^2} + 1_{p^2} v_p^{\mathrm{T}} )s + 8p^2 s \\
             & \left. 
             + 12 \cdot 1_{p^2} 1^{\mathrm{T}}_{p^2} s - 16 p (1_p 1_p^{\mathrm{T}} \otimes I_p) s \}
                \right].        \end{align*}
    Plugging in, this reduces to
    \begin{align*}
        & \nabla_s h(\sqrt{p} v_p, (p - 1)(p + 1)) \\
        =& \frac{24 p^3 (p + 1)(p - 1)^2 (p \sqrt{p} v_p - 1_{p^2}) - 8 p^3 (2p - 1)(p - 1) (p + 1) ( p \sqrt{p} v_p - 1_{p^2} ) }{\{ 4 p^2 (p + 1)(p - 1)\}^2} \\
        =& \frac{8 p^3 (p + 1)(p - 1)(p - 2) (p \sqrt{p} v_p - 1_{p^2}) }{\{ 4 p^2 (p + 1)(p - 1)\}^2} \\
        =& \frac{ (p - 2) (p \sqrt{p} v_p - 1_{p^2}) }{ 2 p (p + 1) (p - 1)}.
    \end{align*}
    whereas the gradient with respect to $m$ is
    \begin{align*}
        \nabla_m h(s, m) = \frac{p^2 b(s, m) - 2 p^2 t(s, m) }{b(s, m)^2},
    \end{align*}
    which evaluates to
    \begin{align*}
        \nabla_m h(\sqrt{p} v_p, (p - 1)(p + 1)) = \frac{- (p - 2) }{ 4 (p + 1)^2 (p - 1) }.
    \end{align*}
    Delta-method now gives the variance of the estimator to be
    \begin{align*}
        \frac{ (p - 2)^2 }{ 2 (p + 1)^2 (p - 1)} - \frac{(p - 2)^2}{(p + 1)^2 (p - 1)} + \frac{(p - 2)^2 (p + 2)}{2 (p + 1)^3 (p - 1)},
    \end{align*}
    concluding the proof.
\end{proof}

\begin{proof}[Proof of Theorem \ref{theo:discrete_case}]
    We first note that $O(P_{\theta})$ admits the simpler form
    \begin{align*}
        O(P_{\theta}) = \frac{1 - 2 s_2 + s_3}{1 - s_2} = \frac{\sum_{i = 1}^p \theta_i (1 - \theta_i)^2}{\sum_{i = 1}^p \theta_i (1 - \theta_i)},
    \end{align*}
    where $s_k = \sum_{i = 1}^p \theta_i^k$.

    For claim (i), assume that $\theta \in \Theta^p$ is such that $O(P_{\theta}) = 1/2$ and $\theta$ has at least three non-zero elements. Then, it is simple to check that there is a non-zero probability that equality is not reached in the triangle inequality $1 - \mathbb{I}(X_i = X_k) = 2 - \mathbb{I}(X_i = X_j) - \mathbb{I}(X_j = X_k)$ for any permutation $(i, j, k)$ of the elements $(1, 2, 3)$ (this happens if $X_i, X_j, X_k$ all take a different value from each other). By Theorem \ref{theo:geometric_characterization} this leads to a contradiction and, hence, $O(P_{\theta}) = 1/2$ implies that $\theta$ must have exactly two non-zero elements (since we disallowed the case of a single non-zero element).

    To show the converse, assume that $\theta$ has exactly two non-zero elements. Without loss of generality, we can take $\theta = (q, 1 - q, 0, \ldots , 0)$. Then,
    \begin{align*}
        O(P_{\theta}) = \frac{1 - 2q^2 - 2(1 - q)^2 + q^3 + (1 - q)^3}{1 - q^2 - (1 - q)^2} = \frac{1}{2},
    \end{align*}
    proving the claim (i).

    For the claim (ii), we write
    \begin{align*}
        O(P_{\theta}) = 1 - \frac{s_2 - s_3}{1 - s_2},
    \end{align*}
    after which an application of Lemma \ref{lem:probability_lemma} in Appendix \ref{sec:proofs} yields the desired claim.
\end{proof}

% \joni{CHECKED}

\begin{lemma}\label{lem:probability_lemma}
    Let $p \geq 3$ and let $\theta \in \Theta^p$. Then,
    \begin{align*}
        \sum_{k = 1}^p \theta_k (1 - \theta_k) \leq p \sum_{k = 1}^p \theta_k^2 (1 - \theta_k),
    \end{align*}
    with equality if and only if $\theta_k = 1/p$ for all $k \in \{ 1, \ldots , p \}$.
\end{lemma}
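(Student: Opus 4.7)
The plan is to first rewrite the target inequality in the equivalent algebraic form $(p+1)s_2 - p s_3 \geq 1$, where $s_k := \sum_i \theta_i^k$ (obtained by expanding $\theta_k(1-\theta_k)$ and $\theta_k^2(1-\theta_k)$ and using $\sum_k \theta_k = 1$). The cornerstone of the proof is the identity
\begin{align*}
(p+1) s_2 - p s_3 - 1 \;=\; p \sum_{k=1}^p \left( \theta_k - \tfrac{1}{p} \right)^2 \left( \tfrac{p-1}{p} - \theta_k \right),
\end{align*}
which I would verify by direct expansion of the right-hand side, using only $\sum_k \theta_k = 1$ to collapse the linear and constant contributions. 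This reduces the claim to showing that the displayed sum is nonnegative, and zero only at the uniform distribution.

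I would then split into two cases depending on $\max_k \theta_k$. In the easier case where $\theta_k \leq (p-1)/p$ for every $k$, each summand is manifestly nonnegative, so the sum is nonnegative. Equality would force each factor to vanish, i.e.\ $\theta_k \in \{1/p, (p-1)/p\}$ for all $k$; a short counting argument using $\sum_k \theta_k = 1$ and $p \geq 3$ then shows the only admissible configuration with at least two nonzero entries is $\theta = (1/p)1_p$.

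The main obstacle is the remaining case where some (necessarily unique) $\theta_{k_0} > (p-1)/p$, so $\theta_{k_0} = 1 - \epsilon$ with $\epsilon \in (0, 1/p)$; note $\epsilon > 0$ strictly, because $\Theta^p$ excludes point masses. Then the $k_0$-summand is strictly negative and must be absorbed by the remaining $p-1$ nonnegative summands, for which $\theta_l \leq \epsilon < 1/p < (p-1)/p$ thanks to $p \geq 3$. My plan is to combine the pointwise bound $(p-1)/p - \theta_l \geq (p-1)/p - \epsilon$ with the Cauchy--Schwarz estimate
\begin{align*}
\sum_{l \neq k_0} (1/p - \theta_l)^2 \;\geq\; \frac{1}{p-1}\left(\frac{p-1}{p} - \epsilon\right)^2,
\end{align*}
yielding a lower bound of $((p-1)/p - \epsilon)^3 / (p-1)$ on the positive contribution. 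Comparing this to the negative contribution $-((p-1)/p - \epsilon)^2(1/p - \epsilon)$ and cancelling the strictly positive common factor $((p-1)/p - \epsilon)^2$ reduces everything to the trivial inequality $(p-2)\epsilon \geq 0$, which is strict precisely in our regime $p \geq 3$, $\epsilon > 0$. This delivers strict inequality throughout the second case, and together with the analysis above confirms that equality holds on $\Theta^p$ if and only if $\theta = (1/p) 1_p$.
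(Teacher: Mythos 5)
Your proof is correct. The central identity
\begin{align*}
(p+1)s_2 - p s_3 - 1 \;=\; p \sum_{k=1}^p \Bigl( \theta_k - \tfrac{1}{p} \Bigr)^2 \Bigl( \tfrac{p-1}{p} - \theta_k \Bigr)
\end{align*}
expands correctly (the linear terms collapse via $s_1 = 1$ to give exactly $-(2p-1)/p + (p-1)/p = -1$), the equality analysis in the first case is settled by the counting argument ($m$ entries equal to $(p-1)/p$ forces $m(p-2)=0$, hence $m=0$ for $p \geq 3$), and the second case reduces, after one Cauchy--Schwarz step and cancellation of the strictly positive factor $((p-1)/p - \epsilon)^2$, to $\epsilon(p-2)/(p-1) > 0$. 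The paper uses the same case split on whether $\max_k \theta_k$ exceeds $1 - 1/p$, and its first case is in substance your identity in disguise: the quantity $(\theta_k - 1/p)\{f(\theta_k) - f(1/p)\}$ with $f(x) = x(1-x)$ factors as exactly your summand $(\theta_k - 1/p)^2((p-1)/p - \theta_k)$, though the paper argues its sign via the monotonicity of $f$ on subintervals rather than from the explicit product form. Where your route genuinely diverges, and improves on the paper, is the second case: the paper chains two Cauchy--Schwarz estimates into an upper bound that is a cubic in $q = \max_k \theta_k$ and then disposes of it by a derivative computation, whereas you exploit the fact that the decomposition is global, isolate the single negative summand, and kill it with one Cauchy--Schwarz and a one-line algebraic cancellation. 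Your version is shorter, makes the role of the hypothesis $p \geq 3$ transparent (it appears only as the factor $p-2$), and tracks the strictness of the inequality more cleanly than the paper's ``equality cannot hold in both inequalities simultaneously'' bookkeeping.
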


\begin{proof}[Proof of Lemma \ref{lem:probability_lemma}]
    We divide the proof in two disjoint cases. Consider first the case where $\theta_k \leq 1 - 1/p$ for all $k$ and let $f: \mathbb{R} \to \mathbb{R}$ denote the map $f(x) = x(1 - x)$. Then, for a fixed $k = 1, \ldots, p$,
    \begin{align*}
        & (\theta_k - 1/p)\left\{ f(\theta_k) - \frac{1}{p} \sum_{\ell = 1}^p f(\theta_\ell) \right\}\\
        =& (\theta_k - 1/p)\left\{ f(\theta_k) - f(1/p) \right\} + (\theta_k - 1/p)\left\{ f(1/p) - \frac{1}{p} \sum_{\ell = 1}^p f(\theta_\ell) \right\} \\
        \geq&  (\theta_k - 1/p)\left\{ f(1/p) - \frac{1}{p} \sum_{\ell = 1}^p f(\theta_\ell) \right\},
    \end{align*}
    where the inequality holds because $f$ achieves its maximum in $[0, 1/p]$ at $p = 1/p$ and its minimum in $[1/p, 1 - 1/p]$ at the end points. Also, equality in the above inequality is reached if and only if $\theta_k \in \{ 1/p, 1 - 1/p \}$. Taking now the mean of the previous inequality over $k$, we obtain the desired inequality. As $p \geq 3$, the condition for equality follows from the previous individual sufficient and necessary conditions. % For $p = 2$, we have $1/p = 1 - 1/p$ and the condition again follows.

    Assume next that $\theta_p =: q > 1 - 1/p$ (in which case all other $\theta_k$ are strictly smaller than $1/p$) and recall that $p \geq 3$. Now $\sum_{k = 1}^{p - 1} \theta_k = 1 - q$ and the Cauchy-Schwarz inequality gives that $\sum_{k = 1}^{p - 1} \theta_k^2 \geq (1 - q)^2/(p - 1)$ with equality if and only if $\theta_k = (1 - q)/(p - 1)$ for all $k = 1, \ldots , p - 1$. Consequently,
    \begin{align}\label{eq:inequality_chain_K}
    \begin{split}
        & \sum_{k = 1}^p \theta_k (1 - \theta_k) - p \sum_{k = 1}^p \theta_k^2 (1 - \theta_k) \\
        \leq&  1 - q - \frac{1}{p - 1}(1 - q)^2 + q ( 1 - q ) - p \sum_{k = 1}^{p - 1} \theta_k^2 (1 - \theta_k) - p q^2 (1 - q) \\
        =& 1 - q^2 - \frac{1}{p - 1}(1 - q)^2 - p \sum_{k = 1}^{p - 1} \theta_k^2 (1 - \theta_k) - p q^2 (1 - q). 
    \end{split}
    \end{align}
    Now, Cauchy-Schwarz gives us,
    \begin{align*}
        (p - 2 + q) \sum_{k = 1}^{p - 1} \theta_k^2 (1 - \theta_k) \geq \left( \sum_{k = 1}^{p - 1} \theta_k (1 - \theta_k) \right)^2,
    \end{align*}
    and we furthermore have that $\sum_{k = 1}^{p - 1} \theta_k^2 \leq (1 - q)^2$ where equality is reached if and only if exactly one of $\theta_k$, $k = 1, \ldots, p - 1$, is non-zero. This gives,
    \begin{align*}
        (p - 2 + q) \sum_{k = 1}^{p - 1} \theta_k^2 (1 - \theta_k) \geq q^2 (1 - q)^2,
    \end{align*}
    which, when plugged in into \eqref{eq:inequality_chain_K}, gives,
    \begin{align}\label{eq:inequality_chain_K_2}
    \begin{split}
        & \sum_{k = 1}^p \theta_k (1 - \theta_k) - p \sum_{k = 1}^p \theta_k^2 (1 - \theta_k) \\
        & \leq 1 - q^2 - \frac{1}{p - 1}(1 - q)^2 - \frac{p}{p - 2 + q} q^2 (1 - q)^2 - p q^2 (1 - q) \\
        & \leq 1 - q^2 - \frac{1}{p - 1}(1 - q)^2 - \frac{p}{p - 1} \left( 1 - \frac{1}{p} \right) q (1 - q)^2 - p q^2 (1 - q) \\
        &= \frac{1 - q}{p - 1}\left\{  (p - 1)(1 + q) - (1 - q) - (p - 1) q ( 1 - q ) - p (p - 1) q^2 \right\} \\
        &= \frac{1 - q}{p - 1}\left\{ p - 2 + q - (p - 1)^2 q^2 \right\},
     \end{split}
    \end{align}
    where equality cannot hold in both of the inequalities at the same time (one of them requires equal $\theta_k$, $k = 1, \ldots, p - 1$, and one of them requires exactly one of them to be non-zero).
    
    Taking now the expression in brackets on the right-hand side of \eqref{eq:inequality_chain_K_2} and treating it as a function $g$ of $q > 1 - 1/p$, we have,
    \begin{align*}
        g'(q) = 1 - 2 (p - 1)^2 q < 1 - \frac{2 (p - 1)^3}{p},
    \end{align*}
    which is strictly negative for $p \geq 3$. Finally,
    \begin{align*}
        O(1 - 1/p) =& \frac{1}{p^2} \{ p^2 (p - 2) + p (p - 1) - (p - 1)^4 \}\\
        =& -\frac{1}{p^2} \{ 1 + p (p - 1)^2 (p - 3) \},
    \end{align*}
    which is strictly negative for $p \geq 3$. These two results about $g$ yield that $g(q) < 0$ for all $q > 1 - 1/p$ and $p \geq 3$, showing, by the right-hand side of \eqref{eq:inequality_chain_K_2}, that the desired inequality holds (strictly) in this case, finishing the proof.
\end{proof}

\begin{proof}[Proof of Theorem \ref{theo:discrete_case_h0}]
    Let $s_k = \sum_{i = 1}^p (n_i/n)^k$. We then note that Pearson's chi squared statistic can be expressed as $n(p s_2 - 1)$. Plugging this is, we observe that the claim of this theorem is equivalent to having
    \begin{align*}
        n \frac{(p + 1) s_2 - 1 - p(p - 1) s_3 + p(p - 2) s_2^2}{(p - 2)(1 - s_2)} = o_p(1).
    \end{align*}
    Thus, the proof is complete once we show that
    \begin{align}\label{eq:s2_s3_connection}
        n\{ (p + 1) s_2 - 1 - p(p - 1) s_3 + p(p - 2) s_2^2 \} = o_p(1).
    \end{align}
    Letting $x = (n_1/n, \ldots, n_p/n)$, we have, by the central limit theorem, the limiting distribution,
    \begin{align*}
        \sqrt{n} \left( x - \frac{1}{p} 1_p \right) \rightsquigarrow \mathcal{N}_p \left( 0 , \frac{1}{p} \left(I_p - \frac{1}{p} 1_p 1_p^{\mathrm{T}} \right) \right).
    \end{align*}
    The function, say $g$, that maps $x$ to the quantity inside the brackets on the left-hand side of \eqref{eq:s2_s3_connection} has the following gradient and Hessian, for $i \neq j$,
    \begin{align*}
        \nabla_i g(x) &= 2 (p + 1) x_i - 3 p(p - 1) x_i^2 + 4 p(p - 2) s_2 x_i \\
        \nabla^2_{ii} g(x) &= 2 (p + 1) - 6 p(p - 1) x_i + 8 p(p - 2) x_i^2 + 4 p(p - 2) s_2 \\
        \nabla^2_{ij} g(x) &= 8 p(p - 2) x_i x_j.
    \end{align*}
    Thus, plugging in $x = (1/p) 1$, the gradient takes the value,
    \begin{align*}
        3 \left( 1 - \frac{1}{p} \right) 1_p,
    \end{align*}
    whereas the Hessian equals,
    \begin{align*}
        8 \left( 1 - \frac{2}{p} \right) 1_p 1_p^{\mathrm{T}}.
    \end{align*}
    Next we observe that the vector $1_p$ belongs to the null space of the matrix $I_p - (1/p) 1_p 1_p^{\mathrm{T}}$, to which the limiting covariance matrix of the vector $x$ is proportional. Consequently, both the first and second-order Delta method applied to the map $g$ produce a degenerate limiting distribution for $g(x)$, finishing the proof.
\end{proof}

\bibliographystyle{apalike}
\bibliography{paper-ref}

\end{document}